\newtheorem{theorem}{Theorem}
\newtheorem{proposition}{Proposition}
\newtheorem{corollary}{Corollary}
\newtheorem{lemma}{Lemma}
\theoremstyle{definition}
\newtheorem{definition}{Definition}
\newtheorem{example}{Example}
\theoremstyle{remark}
\newtheorem{remark}{Remark}
\newcommand{\myu}{x}
\newcommand{\ix}{X}
\newcommand{\myI}{\mathcal{I}}
\newcommand{\norm}[1]{\Vert \hspace{0.4mm} #1 \hspace{0.4mm} \Vert}
\newcommand{\normx}[1]{\norm{#1}_X}
\newcommand{\normy}[1]{\norm{#1}_Y}
\newcommand{\myphi}{\varphi}
\newcommand{\mypsi}{\psi}
\newcommand{\mylambda}{\lambda}
\newcommand{\ualpdel}[1][\alpha]{{\myu}_{#1}^\delta}
\newcommand{\ust}{\myu^\dagger}
\newcommand{\Deltaset}{\Delta_\delta}
\newcommand{\Mdelta}{M_\delta}
\newcommand{\inset}[1]{\{ \, #1 \, \}}
\newcommand{\insetla}[1]{\Big\{ \, #1 \, \Big\}}
\newcommand{\Hdelta}{H_\delta}
\newcommand{\Hdeltab}{\widetilde{H}_\delta}
\newcommand{\Hdeltac}{\widehat{H}_\delta}
\newcommand{\alpdel}{\alpha_*}
\newcommand{\sfrac}[2]{#1/#2}
\newcommand{\alpstar}{\alpha_\ast}
\newcommand{\myb}{\varkappa}
\newcommand{\mymu}{\mu}
\newcommand{\myq}{q}
\newcommand{\myqj}{\myq^j}
\newcommand{\myqpow}{\myq^\myb}
\newcommand{\myqpowinv}{\myq^{-\myb}}
\newcommand{\myqpowinvj}{\myq^{-j\myb}}
\newcommand{\alptil}{\alpha_+}
\newcommand{\mykappa}{K_2}
\newcommand{\myd}{\varrho}
\newcommand{\cob}{c_2}
\newcommand{\coc}{c_3}
\newcommand{\cod}{c_4}
\newcommand{\coe}{c_5}
\newcommand{\cof}{c_6}
\newcommand{\cog}{c_7}
\newcommand{\coh}{c_8}
\newcommand{\Lepski}{Lepski\u\i}
\newcommand{\mybeta}{\tau_{\mbox{\tiny L}}}
\newcommand{\mygamma}{\sigma}
\newcommand{\yd}{y^\delta}
\newcommand{\xdag}{x^\dag}
\newcommand{\xad}{x_{\alpha}^\delta}
\newcommand{\xa}{x_{\alpha}}
\newcommand{\hxa}{\hat x_{\alpha}}
\newcommand{\domain}{\mathcal{D}}
\newcommand{\DF}{\mathcal{D}(F)}
\newcommand{\xbar}{\overline{x}}
\newcommand{\adelta}{\alpha(\yd,\delta)}
\newcommand{\Id}{\operatorname{I}}
\newcommand{\lr}[1]{\left( #1\right)}
\newcommand{\lrb}[1]{\Big(#1\Big)}
\newcommand{\lrc}[1]{(#1)}
\newcommand{\ra}{r_{\alpha}}
\newcommand{\myopt}{\textup{opt}}
\newcommand{\mye}[1]{e{#1}}
\title[Tikhonov regularization with oversmoothing penalty]{Nonlinear Tikhonov regularization in Hilbert
  scales with oversmoothing penalty: inspecting balancing principles }
\author{B. Hofmann}
\address{Faculty of Mathematics, Chemnitz University of Technology,
  09107 Chemnitz, Germany}
\author{C. Hofmann}
\address{Faculty of Mathematics, Chemnitz University of Technology,
  09107 Chemnitz, Germany}
\author{P. Math\'e}
\address{Weierstra{\ss} Institute for Applied Analysis and
  Stochastics, Mohrenstra\ss{}e 39, 10117 Berlin, Germany}
\author{R. Plato}
\address{Department Mathematik, Emmy-Noether-Campus, Universit\"at
  Siegen, Walter-Flex-Str.~3, 57068 Siegen, Germany}
\date{\today}
\begin{document}
\footnote{
    Dedicated to Zuhair Nashed, 
 our esteemed colleague and outstanding
    Professor, doyen of operator
    theory and regularization theory for inverse and ill-posed
    problems}
\begin{abstract}
  The analysis of Tikhonov regularization for nonlinear ill-posed equations with 
 smoothness promoting penalties is an important topic in inverse
 problem theory.
With focus on  Hilbert scale models, the case of oversmoothing
penalties, i.e.,\ when the penalty takes an infinite value at the true solution
 gained increasing interest.
The considered nonlinearity structure is as in the
 study
 B.~Hofmann and P. Math\'{e}. {\it Tikhonov regularization with oversmoothing
penalty for non-linear ill-posed problems in Hilbert scales}. Inverse Problems,
2018.
 Such analysis can address two fundamental
 questions. When is it possible to achieve order optimal
 reconstruction? How to select the regularization parameter?
 The present study complements previous ones by two main facets. First,
 an error decomposition into a smoothness dependent and a (smoothness
 independent) noise
 propagation term is derived, covering a large range of smoothness
 conditions.
 Secondly, parameter selection by balancing principles is presented. A
 detailed discussion, covering some history and variations of the
 parameter choice by balancing shows under which conditions such
 balancing principles yield order optimal reconstruction. A numerical
 case study, based on some exponential growth model, provides additional insights.
\end{abstract}

\maketitle

\section{Introduction}\label{intro}

In the past years, a new facet has found interest in the theory of
inverse problems. When considering
variational (Tikhonov-type) regularization for the stable approximate
solution of {\sl ill-posed} operator equations
\begin{equation} \label{eq:opeq} F(x)=y
\end{equation}
in Hilbert spaces the treatment
of {\sl oversmoothing penalties} gained attention. The current state of the art
concerning that facet shows gaps.
The present study  aims at closing
some of them. Specifically, we analyze variants of the
balancing principle as parameter choice for the
regularization parameter.  Moreover, we would like to illustrate the
theory by numerical case studies.

The forward operator $F: \domain(F) \subseteq X \to Y$, which is
preferably assumed to be {\sl nonlinear}, maps between the separable
infinite dimensional Hilbert spaces $X$ and $Y$ and possesses the
convex and closed subset~$\domain(F)$ of $X$ as domain of
definition.
In the sequel, let $\xdag \in \domain(F)$ be a
solution to equation (\ref{eq:opeq}) for given exact right-hand side
$y=F(\xdag) \in Y$. We throughout assume that $ \xdag \in {\rm int}(\DF)$, which means that the solution belongs to the interior of the domain $ \DF $ of the operator $F$.
Given the  noise level $\delta \ge 0$, we consider the deterministic noise
model
\begin{equation*}\|y-\yd\|_{Y} \le \delta,
\end{equation*}
which means that instead of $y$, only perturbed data
$\yd =y+\delta\,\xi\in Y$ with $\|\xi\|_Y~\le~1$ are available.

The equation (\ref{eq:opeq}) is
ill-posed at least locally at $\xdag$, and finding stable solutions requires some
regularization. We apply Tikhonov regularization with quadratic misfit
and penalty functionals in a {\sl Hilbert scale} setting.

\subsection{Hilbert scales with respect to an unbounded operator}
\label{sec:scales}

The Hilbert scale is generated by some
densely defined, unbounded and self-adjoint linear operator
$B\colon \domain(B) \subset X \to X$ with domain~$\domain(B)$. This
operator $B$ is assumed to be strictly
positive such that we have for some $\underline m>0$
\begin{equation*} \|Bx\|_{X} \geq \underline m \|x\|_{X} \quad
  \mbox{for all} \quad x\in \domain(B).
\end{equation*}
The Hilbert scale $\{X_\tau\}_{\tau \in \mathbb{R}}$,
generated by $B$, is characterized by the formulas
$X_\tau=\domain(B^\tau)$ for $\tau >0$, $X_\tau=X$ for $\tau \le 0$
and $\|x\|_\tau:=\|B^\tau x\|_X$ for $\tau \in \mathbb{R}$.  We do not
need in our setting the topological completion of the spaces
$X_\tau = X$, for $ \tau < 0 $, with respect to the norm~$ \|\cdot\|_\tau $.

\subsection{Tikhonov regularization with smoothness promoting pe\-nalty}
\label{sec:tikhonov}

The operator~$B$ is used for Tikhonov regularization in the
corresponding functional
\begin{equation}
  \label{eq:tikhonov}
  T^\delta_{\alpha}(x) := \|F(x) - \yd\|_{Y}^2 + \alpha\,\|B(x-\xbar)\|_{X}^2,
  \quad x\in\domain:=\domain(F) \cap \domain(B),
\end{equation}
where
$\|F(x) - \yd\|^2$ characterizes the misfit or fidelity term, and
$\xbar \in X$ is an initial guess occurring in the penalty functional
$\|B(x-\xbar)\|_{X}^2$. Throughout this paper we suppose that
$\xbar \in \domain$.
Given a regularization parameter~$\alpha>0$ the corresponding
regularized solutions $\xad$ to $\xdag$ are obtained as the minimizers of the
Tikhonov functional $T^\delta_\alpha$ on the set~$\domain$.  By
definition of the Hilbert scale we have that~$\|B(x-\xbar)\|_{X}^2=\|x-\xbar\|_1^2$ and
consequently $\xad \in \domain(B)=X_1$ for all data~$\yd \in Y$ and
$\alpha>0$. In order to ensure {\sl existence} and {\sl stability} of
the regularized solutions $\xad$ for all $\alpha>0$
(cf.~\cite[\S~3]{HKPS07}, \cite[Section~3.2]{Scherzetal09} and \cite[Section~4.1.1]{SKHK12}), we additionally suppose that the
forward operator $F$ is weakly sequentially continuous.

Our focus will be on oversmoothing penalties,
when~$\xdag\not\in\domain(B)$ and hence $T^\delta_{\alpha}(\xdag)=
\infty$. In this case,  the regularizing property
$T^\delta_{\alpha}(\xad) \le T^\delta_{\alpha}(\xdag)$, which often is
a basic tool
for obtaining error estimates does not yield consequences.

\subsection{State of the art}
\label{sec:state}

The seminal study for Tikhonov regularization, including the case of
oversmoothing penalty, and for  linear ill-posed
operator equations, was published by Natterer in 1984,
cf.~\cite{Natterer84}.
For linear bounded operator $F=A: X \to Y$, Natterer used for this study a two-sided inequality chain
$$
c_a \|x\|_{-a} \le \|Ax\|_Y \le C_a \|x\|_{-a}  \quad \mbox{for all}
\quad x \in X,
$$
with constants $0<c_a \le C_a < \infty$ and a {\sl degree of ill-posedness} $a>0$.
Here, we adapt this to the nonlinear mapping~$F:\domain(F) \subset X \to Y$ as
\begin{equation} \label{eq:twosided} c_{a}\,\|x - \xdag\|_{-a} \leq
  \|F(x) - F(\xdag)\|_{Y} \leq C_{a} \|x - \xdag\|_{-a} \qquad \mbox{for
    all} \;\;\, x\in \mathcal{D},
\end{equation}
and rely upon~(\ref{eq:twosided}) as
an intrinsic \emph{nonlinearity condition} for the mapping~$F$ under
consideration.

This specific nonlinearity condition was first used within the
present context in \cite{HofMat18}. It was shown that a discrepancy
principle yields optimal order convergence under the power type
smoothness assumption that~$\xdag \in X_{p},\ 0< p < 1$. One major
tool was to use certain \emph{auxiliary elements}~$\hxa$ of proximal type, which are
minimizers of the functional
\begin{equation}
  \label{eq:aux-functional}
  \widehat T_{\alpha}(x) :=
  \|x-\xdag\|_{-a}^2+\alpha\|x-\xbar\|_1^2,\quad  x\in X,
\end{equation}
and hence  belong to $X_1=\domain(B)$. This study was complemented
in~\cite{GHH20} within the proceedings {\it Inverse Problems and Related
Topics: Shanghai, China, October 12–14, 2018}, Springer, 2020,  by case studies showing intrinsic problems when using
oversmoothing penalties. The same issue contains results on  \emph{a
  priori} parameter choice~$\alpha_{\ast}\sim \delta^{(2a+2)/(a+p)}$, i.e.,\ when the smoothness~$p$ is assumed to
be known, see~\cite{HofMat20}.

However, the first decomposition of the error into a
smoothness dependent increasing term (as a function of~$\alpha$ tending to zero as $\alpha \to +0$),
and a smoothness independent decreasing term proportional
to~$\frac{\delta}{\alpha^{a/(2a+2)}}$ was developed in~\cite{HofPla20}.
As a consequence, there is something special about this study that 
norm convergence of regularized solutions to the exact solution can be shown
for a wide region of a priori parameter choices and for a specific discrepancy principle
without to make any specific smoothness assumption on $\xdag$.
It is highlighted there that such error decomposition also allows for low order convergence rates
under low order smoothness assumptions on $\xdag$. But it was observed in~\cite{HofHof20} that the error
decomposition from~\cite{HofPla20} extends to power type
smoothness~$\xdag\in X_{p},\ 0< p < 1$, and hence yields optimal
rates of convergence under the a priori parameter
choice.

\subsection{Goal of the present study}
\label{sec:goal}

The present paper complements the series of articles mentioned before.

On the one hand side, it extends the error decomposition
from~\cite{HofPla20,HofHof20} to more general smoothness
assumptions. On the other hand, we present new results to the \emph{balancing principle} for choosing the regularization parameter in Tikhonov regularization for nonlinear problems with oversmoothing penalties in a Hilbert scale setting.
This work was essentially motivated by the recent
paper~\cite{Pricop19}. Pricop-Jeckstadt has also analyzed the
balancing principle for nonlinear problems in Hilbert scales, but only
for non-oversmoothing penalties. In this sense, we try to close a gap in the theory.

The material is organized as follows.
In Section~\ref{TRO-props} we highlight the decomposition of the
  error under smoothness in terms of source conditions. This provides
  us with the required structure in order to found the balancing
  principles in Section~\ref{sec:lepski}.  We give a brief account of
  the history of such principles, and formulate several specifications
  for the setup under consideration. The results presented in this part are general and may be of independent interest.
  Finally, in Section~\ref{example} we
  discuss the exponential growth model, both theoretically, and as
  subject for a numerical case study.

\section{General error estimate for Tikhonov regularization in
  Hilbert scales with oversmoothing penalty}\label{TRO-props}
The basis for an analytical treatment of the balancing principle is formed by general error estimates for Tikhonov regularization in Hilbert scales with oversmoothing penalty.
With the inequality chain~(\ref{eq:twosided}) and for $\xdag \in {\rm int}(\DF)$ such estimates have been developed recently in \cite{HofMat18},
\cite{HofPla20}, and~\cite{HofHof20}   by using \emph{auxiliary
  elements} as minimizers of~(\ref{eq:aux-functional}). Introducing the injective linear
operator~$ G: = B^{-(2a+2)}$, the
corresponding minimizers~$\hxa$ can be expressed explicitly as
$$\hxa=\xbar+G(G+\alpha \Id)^{-1}(\xdag-\xbar)=\xdag-\alpha(G+\alpha \Id)^{-1}(\xdag-\xbar). $$

\subsection{Smoothness in terms of source conditions}
\label{sec:smoothness}

General error estimates were obtained  under general
source conditions, given in terms of index functions~$\mypsi$.
\begin{definition} 
A continuous and non-decreasing function $\varphi: (0,\infty) \to (0,\infty)$ with $\lim_{t \to +0} \varphi(t)=0$ is called {\sl index function}. We call this index function {\sl sub-linear} if there is some $t_0>0$
such that the quotient~$t/\varphi(t)$ is non-decreasing for $0<t \le t_0$.
\end{definition}
In these terms a general source condition for the unknown solution~$\xdag$ is given in the form of
\begin{equation} \label{eq:losc}
\xdag-\xbar = \mypsi(G)\, w,
\quad w \in X.
\end{equation}
Here the linear operator~$\mypsi(G)$ is obtained from the operator~$G$
by spectral calculus.

\subsection{Error decomposition}
\label{sec:error}
The balancing principle relies on an error bound in a specific form, and
the corresponding fundamental error bound is given next.
\begin{theorem} \label{thm:general}
Let $\xdag \in {\rm int}(\DF)$ and let hold the inequality chain (\ref{eq:twosided}) for given the degree of ill-posedness~$a>0$. Moreover, let $\mypsi$ be an index
function such that $\mypsi^{2a+2}$ is sub-linear. If $\xdag$
satisfies a source condition~(\ref{eq:losc}) for that $\psi$, then we have
for some constant $ c_1 > 0 $ depending on $ w $ the general error estimate
\begin{align}
\label{eq:tikh-overesti-error}
\norm{\xad -\xdag}_{X} \le c_1 \mypsi(\alpha) + \frac{\delta}{\mylambda(\alpha)},
\end{align}
where $ \mylambda(\alpha) =  \frac{1}{\mykappa}\alpha^{a/(2a+2)} $, and $ \mykappa = \max\inset{1,\frac{2}{c_a}} $.
\end{theorem}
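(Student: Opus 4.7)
The plan is to bound the error through the auxiliary element $\hxa$ by writing
$$\norm{\xad - \xdag}_X \le \norm{\xad - \hxa}_X + \norm{\hxa - \xdag}_X,$$
and to handle each summand separately. The second summand will provide the smoothness-dependent approximation contribution $c_1\mypsi(\alpha)$, while the first will deliver the smoothness-independent noise propagation $\delta/\mylambda(\alpha)$.

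For the approximation term I would exploit the explicit spectral representation $\hxa - \xdag = -\alpha(G + \alpha\Id)^{-1}\mypsi(G)w$, obtained by substituting the source condition~(\ref{eq:losc}) into the formula already recorded for $\hxa$. Functional calculus then yields
$$\norm{\hxa - \xdag}_X \le \norm{w}_X \, \sup_{t \in \sigma(G)} \frac{\alpha\,\mypsi(t)}{t + \alpha}.$$
The sub-linearity of $\mypsi^{2a+2}$ is the key hypothesis here: for $t \le \alpha$ monotonicity of $\mypsi$ is enough, while for $\alpha \le t \le t_0$ sub-linearity gives $\mypsi(t) \le \mypsi(\alpha) (t/\alpha)^{1/(2a+2)}$, from which a short calculus argument bounds the supremum by a constant multiple of $\mypsi(\alpha)$. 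The residual range of $t$ is harmless since $\sigma(G)$ is bounded (due to $\norm{Bx}_X \geq \underline m \norm{x}_X$).

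For the noise propagation the starting point is the minimality of $\xad$ for $T_\alpha^\delta$, from which, after expanding the misfit via the triangle inequality in $Y$ and invoking the two-sided nonlinearity condition~(\ref{eq:twosided}), one derives a quadratic inequality of the schematic form
$$c_a^2 \norm{\xad - \xdag}_{-a}^2 + \alpha \norm{\xad - \xbar}_1^2 \;\lesssim\; \delta^2 + C_a^2 \norm{\hxa - \xdag}_{-a}^2 + \alpha \norm{\hxa - \xbar}_1^2.$$
The two quantities involving $\hxa$ on the right are again controllable by spectral calculus, using $G^{a/(2a+2)} = B^{-a}$ and $G^{-1/(2a+2)} = B$, and they turn out to be harmless in comparison with the noise term. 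Triangle inequalities transfer the resulting bounds from $\xad - \xdag$ to $\xad - \hxa$ in both the $-a$- and the $1$-norm. The Hilbert scale interpolation $\norm{x}_X \le \norm{x}_{-a}^{1/(a+1)} \norm{x}_1^{a/(a+1)}$ then combines them into the required $X$-norm estimate, and the exponent $a/(2a+2)$ in $\mylambda(\alpha)$ is generated precisely by this interpolation step.

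The principal obstacle will be the bookkeeping of the constants in the noise-propagation step: one has to shuffle squared misfit, noise, and penalty terms so that, after absorption, the noise quantity carries exactly the factor $\mykappa = \max\inset{1, 2/c_a}$, and so that the contributions inherited from $\hxa$ can be absorbed into the single term $c_1\mypsi(\alpha)$ without inflating the noise part. Aligning these constants with the sharp form in~(\ref{eq:tikh-overesti-error}) will require the quadratic manipulations to be performed with some care.
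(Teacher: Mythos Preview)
Your outline is correct and matches the paper's route: decomposition via the auxiliary element $\hxa$, minimality of $\xad$ combined with the two-sided inequality~(\ref{eq:twosided}) for the noise propagation, and spectral calculus followed by interpolation in the Hilbert scale. The paper packages all the required spectral bounds into the single estimate $\alpha^{1-\theta}\norm{G^\theta(G+\alpha\Id)^{-1}(\xdag-\xbar)}_X \le C\,\psi(\alpha)$ for $0 \le \theta \le \tfrac{2a+1}{2a+2}$, and you should note that the full strength of the hypothesis---sub-linearity of $\psi^{2a+2}$ rather than of $\psi$---is used precisely at the top endpoint $\theta=\tfrac{2a+1}{2a+2}$, which is the bound on $\alpha^{1/2}\norm{\hxa-\xbar}_1$ arising in your noise-propagation step, not the $\norm{\hxa-\xdag}_X$ term you spelled out in detail.
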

The proof follows the arguments of Proposition~3.4 in \cite{HofPla20},
and we briefly sketch these.

\begin{proof}
A substantial ingredient of the proof is
the fact that due to \cite{MatPer03} sub-linear index functions are
qualifications for the classical Tikhonov regularization approach with
norm square penalty, and the verification of formula~(25) in
\cite{HofHof20}, which in turn is based on the bounds~(21)--(23) ibid.
As can be seen from there it is enough to show that under the above assumptions, and
for~$0 \leq \theta \leq \frac{2a +1}{2a +2}$,  we have that
\begin{equation}
  \label{eq:basic}
  \alpha^{1 - \theta} \norm{G^{\theta} \lr{G + \alpha \Id}^{-1}(\xdag -
    \xbar)}_{X}\leq C \mypsi(\alpha),\quad \alpha >0.
 \end{equation}
  To this end we start from the observation that under the source condition~(\ref{eq:losc})
  we have
  $$
\norm{G^{\theta} \lr{G + \alpha \Id}^{-1}(\xdag -
    \xbar)}_{X}\leq \norm{w}_{X}\norm{G^{\theta} \lr{G + \alpha
      \Id}^{-1}\mypsi(G)}_{X \to X}.
  $$
By introducing the residual function for (classical) Tikhonov
regularization~$\ra(t) := \alpha/(t+\alpha),\ \text{for} \ t,\alpha>0$, it
suffices to   bound
  $$ \frac 1 \alpha\norm{\ra(G) G^{\theta}\mypsi(G)}_{X \to
    X}.
  $$
 The function~$t \mapsto t^{\theta}\mypsi(t)$ plainly constitutes
 an index function. We shall establish that it is sub-linear. To
 this end we write
 $$
 \left[\frac{t}{t^{\theta}\mypsi(t)}\right]^{2a+2} = \frac{t^{(1-\theta)(2a+2)}}{\mypsi^{2a+2}(t)}
 = t^{(1-\theta)(2a+2)-1}\frac{t}{\mypsi^{2a+2}(t)}, \quad 0 < t \leq t_{0}.
 $$
 Under the made sub-linearity assumption we
 find that~$t \mapsto t^{\theta}\mypsi(t)$ is sub-linear
 provided that~$(1-\theta)(2a+2)-1 \geq 0$, which corresponds
 to~$\theta \leq \frac{2a +1}{2a +2}$. Thus in this range the
 function~$t^{\theta}\mypsi(t)$ is a qualification for Tikhonov
 regularization, and we conclude that
  $$
\frac 1 \alpha\norm{\ra(G) G^{\theta}\mypsi(G)}_{X \to
    X} \leq C \frac 1 \alpha\alpha^{\theta}\mypsi(\alpha) =
  C \alpha^{\theta-1}\mypsi(\alpha),\quad \alpha>0,
  $$
  which yields~(\ref{eq:basic}).
\end{proof}

We note that the error estimate \eqref{eq:tikh-overesti-error} does not correspond to the natural estimate
$ \normx{\xad -\xdag} \le
\normx{\xa -\xdag} + \normx{\xad - \xa } $.

We highlight the above result for the prototypical examples, as
studied previously.
\begin{example}[power-type smoothness]\label{xmpl:power}
Theorem~\ref{thm:general} applies for {\sl H\"older-type source conditions} of the form
\begin{equation} \label{eq:typeH1}
 \xdag-\xbar = B^{-p} w= G^{p/(2a+2)} \quad \mbox{for} \quad 0 < p
\le 1,
\end{equation}
 which characterize for this type the case of oversmoothing
penalties. Indeed, the corresponding function~$\psi$ is~$\psi(t)=
t^{\frac{p}{2a+2}}$, such that~$\psi^{2a+2}(t) = t^{p}$ is sub-linear
whenever~$0<p\leq 1$. Then the a priori parameter choice~$\alpha_{\ast}:=
\delta^{\frac{2a+2}{a+p}}$ yields under the source condition (\ref{eq:typeH1}) a H\"older-type convergence rate of the form
\begin{equation} \label{eq:typeH2}
\|x^\delta_{\alpha_{\ast}}-\xdag\|_X=\mathcal O\lrc{\delta^{\frac p{a+p}}} \quad \mbox{as} \quad \delta \to 0.
\end{equation}
Such power-type rates correspond to \emph{moderately ill-posed problems}.

We note that the same convergence rate (\ref{eq:typeH2}) can also be obtained for~$ \alpha_{\ast} $, determined by the discrepancy principle in the
sense of formula~(\ref{eq:intr_DP}) below, cf.~\cite{HofMat18}. 

\end{example}

\begin{example}
  [low order smoothness]\label{xmpl:log}
Theorem~\ref{thm:general}
also applies for {\sl low order source conditions} with
$\mypsi$, for  which $\mypsi^{2a+2}$ is always sub-linear. Most
prominently, we assume that there is some exponent~$\mymu>0$, such that~$
\mypsi(t) =K\, (\log^{-\mymu} (1/t))$ for $0<t \le t_0=e^{-1}$, and continuously extended as
constant for $t_0<t \le \|G\|_{X \to X}$.
Moreover,  the a priori parameter choice resulting in~$\alpha_{\ast}:=
\delta$ yields  the rate of convergence
\begin{equation} \label{eq:typeL}
\|x^\delta_{\alpha_{\ast}}-\xdag\|_X= \mathcal O\lrc{\log^{-\mymu}(1/\delta)}  \quad \mbox{as} \quad \delta \to 0.
\end{equation}
Such logarithmic rates correspond to \emph{severely ill-posed
  problems}  (cf.~the study~\cite{Hohage00}).
\end{example}
%
%
\begin{example}
[no explicit smoothness]\label{xmpl:no-smoothness}
Theorem~\ref{thm:general} provides us with an upper bound of the
error, once a smoothness condition of the form \eqref{eq:losc} is available such that
$ \psi^{2a+2} $ is sub-linear.
Now we argue as follows. For the operator~$B^{-1}$ there is an index
function~$\myphi$ such that~$\xdag-\xbar = \myphi(G)v,\
v \in X $, see~\cite{MathHof2008} for compact~$B^{-1}$,
and~\cite{HMvW2009} for the general case. In the compact case it is
shown in~\cite[Cor.~2]{MathHof2008} that this index function may be
chosen concave, and hence sub-linear.
 Then
letting~$\psi:= \varphi^{1/(2a+2)}$
we can
apply Theorem~\ref{thm:general} for this index function~$\psi$.
In
particular we conclude the following. If~$\alpdel:= \alpdel(\yd,\delta)$ is any
parameter choice such that~$\alpdel\to 0$, and also~$\delta/
\alpdel^{a/(2a+2)}\to 0$ as~$\delta\to 0$,
then~$\normx{x_{\alpdel}^{\delta}-\xdag}\to 0$. For the general case
of $ B $, which includes $ B^{-1} $ non-compact, the latter result may
be found in \cite[Thm.~4.1]{HofPla20} by an alternative proof based on the Banach--Steinhaus theorem.
\end{example}
%
%
%
%
\section{Balancing principles}
\label{sec:lepski}
Vast majority of regularization theory for ill-posed equations is
concerned with asymptotic properties of regularization, as this is
convergence, and if so, rates of convergence.
For given operator $ F: \DF \subseteq X \to Y $ between Hilbert spaces $ X $ and $ Y $,
let $ Y \ni \yd \mapsto \xad := R_{\alpha}(\yd) \in X $ be any regularization
scheme for the stable approximate determination of $ \xdag \in \DF $ from data $ \yd \in Y $ such that \linebreak
$ \normy{F(\xdag)-\yd} \le \delta $. Its error at~$\xdag$ is then considered uniformly for
admissible data as
\begin{align*}
  e(\xdag,R_{\alpha},\delta) := \sup_{\yd:\,\,\normy{F(\xdag)-\yd}\leq
    \delta}\normx{\xad-\xdag}.
\end{align*}
We call a parameter choice~$\alpha = \alpha(\yd,\delta)$
\emph{convergent} if~$e(\xdag,R_{\adelta},\delta) \allowbreak \to 0$
whenever~$\delta\to 0$. In most cases, convergence of regularization parameter choices is analyzed
\emph{uniformly on some class~$\mathcal M \subset X $}, i.e.,\ it is studied
when we have $\sup_{\xdag\in\mathcal M}e(\xdag,R_{\adelta},\delta) \to 0$.

In contrast, there are studies which highlight a different paradigm:
What is the quality of a particular regularization and parameter
choice at any given data~$\yd$? This is relevant, since in practice we
are given just one instance of data~$\yd$. Then convergence is not an
issue, rather the aim is to do the best possible for any such
instance.  Balancing principles may be used for this purpose.

\Lepski's balancing principle is most prominent for the
latter paradigm. It arose in a series of papers, starting
from~\cite{MR1091202}, and it gained special attention in statistics
within the subject of `oracle inequalities' for the purpose of model
selection since then.

Within classical regularization theory it was first
studied in~\cite{MatPer03}; fundamental discussions are given
in~\cite{PerSchock05,Mathe06}.
A variation of this statistically motivated approach was followed
starting from~\cite{Raus_Haemarik[07]}, where the above mentioned paradigm was called
quasi-optimality. The original presentation of this  idea dates back
to~\cite{Leonov91}, called point-wise pseudo-optimal. It is shown
ibid.~Theorem~7, and for
Tikhonov regularization, that the choice~$\alpha_{\ast}$, obtained as
solutions of the extremal problem
\begin{equation*}
  \norm{\alpha \frac{d \xad}{d\alpha}}_X +
  \frac{\delta}{\sqrt{\alpha}} \longrightarrow \min
\end{equation*}
is point-wise pseudo-optimal.
Here we follow the approach from~\cite{Raus_Haemarik[07],
  Haemarik_Raus[09]} by using the concept of \emph{quasi-optimality}.
\subsection{Quasi-optimality}
\label{quasiopt}
In what follows, we assume that for some fixed searched-for element
$\ust \in \ix $ and
$ \ualpdel =  R_{\alpha}(\yd) \in \ix \ (\alpha > 0, \ 0 < \delta \le \delta_0) $
obtained from some noisy data $ \yd \in Y $ and
by using some regularization scheme $ R_{\alpha} $ not further specified, the estimate
\begin{align}
\label{eq:esti-error-general}
  \norm{\ualpdel -\ust}_{X} \le \myphi(\alpha) + \frac{\delta}{\mylambda(\alpha)}
\qquad (\alpha > 0)
\end{align}
holds, where $\myphi,\ \mylambda: (0,\infty) \to (0,\infty) $ are both index
functions. The function~$\alpha \mapsto \mylambda(\alpha)$ is assumed
to be known. This approach is generic and we shall not make use of any
specific properties of the operator~$ F $, nor of any specific conditions on the noisy data $ y^\delta $.

\begin{definition}
\label{th:quasioptimal}
Suppose that the error bound~\eqref{eq:esti-error-general} holds true.
A parameter choice strategy $ \alpdel = \alpdel(\yd,\delta) \ (0 < \delta \le \delta_0) $ is
called \emph{quasi-optimal}, if there is a constant~$\cob>0$ such that
for~$0 < \delta \leq \delta_{0}$
an estimate of the following kind is satisfied:
\begin{align}
\label{eq:quasioptimal}
\normx{\ualpdel[\alpdel] -\ust} \le
\cob \inf_{\alpha > 0} \lrb{
\myphi(\alpha) + \frac{\delta}{\mylambda(\alpha)}
}.
\end{align}
%
\end{definition}
%
Note that the constant $ \cob $ in Definition~\ref{th:quasioptimal},
called the~\emph{error constant} below,
may depend on the function $ \myphi $. In addition, note that Definition~\ref{th:quasioptimal} also includes a posteriori parameter choices since in applications~$ \ualpdel $ rely on data $ y^\delta \in Y $.

We highlight a general feature of quasi-optimal parameter choice.
\begin{proposition}
  \label{th:rate-guarantee}
  Suppose that the error bound~(\ref{eq:esti-error-general}) holds true.
\begin{enumerate}
\item
If for some parameter choice
rule~$\alptil=\alptil(\yd,\delta)$ we can guarantee a rate
\begin{align*}
  \varphi(\alptil) + \frac{\delta}{\mylambda(\alptil)} =
    \mathcal O(\varrho(\delta)),\quad \text{as} \quad  \delta\to 0,
\end{align*}
  for some index function~$\varrho$,
  then any quasi-optimal parameter choice
	$ \alpdel=\alpdel(\yd,\delta) $ yields the convergence rate
$$ \normx{\ualpdel[\alpdel] -\ust} = \mathcal O(\varrho(\delta)) \quad \mbox{as} \quad \delta \to 0. $$
   \item
Any quasi-optimal parameter choice $ \alpdel=\alpdel(\yd,\delta) $ yields convergence
$ \normx{\ualpdel[\alpdel] -\ust} \to 0\, $ as $\, \delta \to 0 $.
\end{enumerate}
\end{proposition}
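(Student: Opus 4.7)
The plan is to derive both parts directly from the quasi-optimality inequality~\eqref{eq:quasioptimal}, exploiting only that its right-hand side is an infimum taken over \emph{all} admissible parameters $\alpha > 0$. No machinery from the preceding sections is actually needed — the proposition is a structural consequence of Definition~\ref{th:quasioptimal} together with the index-function property of $\myphi$ and $\mylambda$.

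For part~(i), I would simply evaluate the infimum at the specific choice $\alptil=\alptil(\yd,\delta)$ provided by the hypothesis. Since every concrete choice majorises the infimum, one has
\begin{align*}
\normx{\ualpdel[\alpdel] -\ust}
\;\le\; \cob \inf_{\alpha > 0}\lrb{\myphi(\alpha) + \frac{\delta}{\mylambda(\alpha)}}
\;\le\; \cob\,\lrb{\myphi(\alptil) + \frac{\delta}{\mylambda(\alptil)}}
\;=\; \mathcal{O}(\myd(\delta)),
\end{align*}
which is the asserted rate, the multiplicative constant $\cob$ being absorbed into the big-$\mathcal{O}$.

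For part~(ii), I would first prove the auxiliary fact
\[
\lim_{\delta \to 0}\,\inf_{\alpha > 0}\lrb{\myphi(\alpha) + \frac{\delta}{\mylambda(\alpha)}} \;=\; 0.
\]
Given $\varepsilon > 0$, I would fix $\alpha_0 > 0$ small enough that $\myphi(\alpha_0) < \varepsilon/2$, which is possible because $\myphi$ is an index function with $\myphi(t) \to 0$ as $t \to 0^+$. The value $\mylambda(\alpha_0)$ is then a fixed positive constant independent of $\delta$, so for all $\delta < \varepsilon\,\mylambda(\alpha_0)/2$ we get $\delta/\mylambda(\alpha_0) < \varepsilon/2$. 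Since the infimum is dominated by the value of the bracket at $\alpha = \alpha_0$, it lies below $\varepsilon$ for all such $\delta$. Plugging this into~\eqref{eq:quasioptimal} yields $\normx{\ualpdel[\alpdel] -\ust} \to 0$.

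There is no substantive obstacle; the only mild point requiring attention in part~(ii) is that the auxiliary $\alpha_0$ must be selected independently of the noise level, but this is unproblematic because $\myphi$ itself does not depend on $\delta$. As a stylistic alternative, part~(ii) can be read as a corollary of part~(i) by choosing any rule $\alptil=\alptil(\delta)$ with $\alptil(\delta) \to 0$ and $\delta/\mylambda(\alptil(\delta)) \to 0$ as $\delta \to 0$ — for instance the $\delta$-balanced value determined by $\myphi(\alpha) = \delta/\mylambda(\alpha)$ — and letting $\myd(\delta)$ be the resulting bound, which itself tends to zero.
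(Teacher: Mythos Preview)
Your proposal is correct and matches the paper's own argument. For part~(i) the paper simply says ``the first part is obvious,'' which is exactly your one-line evaluation of the infimum at~$\alptil$; for part~(ii) the paper bounds the infimum by choosing any $\alptil=\alptil(\delta)$ with $\alptil\to 0$ and $\delta/\mylambda(\alptil)\to 0$, which is precisely the ``stylistic alternative'' you give at the end, while your primary $\varepsilon$--$\alpha_0$ argument is an equally valid and only cosmetically different way of saying the same thing.
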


\begin{proof}
The first part is obvious. For the second part, one may consider the right-hand side of \eqref{eq:quasioptimal} with any parameter choice $ \alptil = \alptil(\yd,\delta) $ such that
both $ \alptil \to 0 $ and~$ \delta/\lambda(\alptil) \to 0 $ as $ \delta \to 0 $.
\end{proof}

We stress once again, the focus of quasi-optimality of a parameter choice strategy is not
convergence, rather it emphasizes an \emph{oracle property}: If (by
some oracle) we are given a parameter choice
rule~$\alptil:= \alptil(\yd,\delta)$ which realizes
\begin{equation}
  \label{eq:oracle}
\myphi(\alptil) + \frac{\delta}{\mylambda(\alptil)}
\le C  \inf_{\alpha > 0} \Big(
\myphi(\alpha) + \frac{\delta}{\mylambda(\alpha)}
\Big),
\end{equation}
then the quasi-optimal rule is (up to the constant~$\cob$) as good as
the oracle choice.

In the sub-sequent sections, we shall describe variants of the
balancing principle and we shall show that these are quasi-optimal.

\subsection{The balancing principles: setup and formulation}
\label{sec:setup-balancing}

We constrain to the following setup. First, we shall assume that the noise amplification term is of the form
\begin{align}
\label{eq:lambda-def}
\mylambda(\alpha) =  \frac{\alpha^{\myb}}{\mykappa},
\end{align}
where $ \mykappa > 0 $ and $ \myb > 0 $.
For standard regularization schemes for selfadjoint and non-selfadjoint linear problems in Hilbert spaces, we have~$ \myb = 1 $ and $ \myb = \tfrac{1}{2} $, respectively.
In the situation of Section \ref{TRO-props}, representation \eqref{eq:lambda-def} holds for $ \myb = \frac{a}{2a+2} $
and  $ \mykappa = \max\inset{1,\frac{2}{c_a}} $.
\medskip

The following result will be utilized at several occasions.
\begin{lemma}
\label{th:oracle-2}
Let
$ \alptil = \alptil(\yd,\delta) > 0 $ be any parameter choice satisfying
\begin{align}
\label{eq:oracle-2}
\myphi(\alptil) \le \coc  \frac{\delta}{\mylambda(\alptil)},
\qquad
\frac{\delta}{\mylambda(\myd \alptil)} \le \cod \myphi(\myd\alptil),
\end{align}
where $ \coc, \, \cod > 0 $ and $ \myd \ge 1 $  denote some finite constants chosen       independently of $ \delta $. Then $ \alptil $ satisfies the oracle estimate
\eqref{eq:oracle}, with a constant that may by chosen as $ C = \myd^\myb(1 +              \max\{\coc,\cod\}) $.
\end{lemma}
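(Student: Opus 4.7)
}

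The plan is to bound the quantity $\Psi(\alpha) := \myphi(\alpha) + \delta/\mylambda(\alpha)$ from below for arbitrary $\alpha>0$, by splitting into the two regimes $\alpha \le \myd\alptil$ and $\alpha > \myd\alptil$, and in each case comparing with $\Psi(\alptil)$. The key quantitative input will be the power-law form \eqref{eq:lambda-def} of $\mylambda$, which gives the exact scaling
\[
\mylambda(\myd\alptil) = \myd^\myb\,\mylambda(\alptil),
\]
and hence $\delta/\mylambda(\myd\alptil) = \myd^{-\myb}\,\delta/\mylambda(\alptil)$. Combined with the fact that $\myphi$ is non-decreasing, this is all that is needed.

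First I would treat the regime $\alpha \le \myd\alptil$. Here $\mylambda(\alpha) \le \mylambda(\myd\alptil) = \myd^\myb\mylambda(\alptil)$, so
\[
\frac{\delta}{\mylambda(\alptil)} \;\le\; \myd^\myb\,\frac{\delta}{\mylambda(\alpha)} \;\le\; \myd^\myb\,\Psi(\alpha),
\]
and the first inequality in \eqref{eq:oracle-2} bounds $\myphi(\alptil) \le \coc\,\delta/\mylambda(\alptil) \le \coc\,\myd^\myb\,\Psi(\alpha)$. Adding the two gives $\Psi(\alptil) \le \myd^\myb(1+\coc)\,\Psi(\alpha)$.

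Next I would treat the regime $\alpha > \myd\alptil$. By monotonicity of $\myphi$ together with the second inequality in \eqref{eq:oracle-2},
\[
\myphi(\alpha) \;\ge\; \myphi(\myd\alptil) \;\ge\; \frac{1}{\cod}\,\frac{\delta}{\mylambda(\myd\alptil)} \;=\; \frac{1}{\cod\,\myd^\myb}\,\frac{\delta}{\mylambda(\alptil)},
\]
so $\delta/\mylambda(\alptil) \le \cod\,\myd^\myb\,\Psi(\alpha)$. Also, since $\alpha > \alptil$, monotonicity of $\myphi$ gives $\myphi(\alptil) \le \myphi(\alpha) \le \Psi(\alpha)$. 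Adding yields $\Psi(\alptil) \le (1 + \cod\,\myd^\myb)\,\Psi(\alpha) \le \myd^\myb(1+\cod)\,\Psi(\alpha)$, where I use $\myd\ge 1$ so that $\myd^\myb \ge 1$.

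Taking the larger of the two bounds across the two regimes gives $\Psi(\alptil) \le \myd^\myb(1+\max\{\coc,\cod\})\,\Psi(\alpha)$ for every $\alpha>0$, and taking the infimum over $\alpha$ produces \eqref{eq:oracle} with $C = \myd^\myb(1+\max\{\coc,\cod\})$. There is no real obstacle: the only subtlety worth pointing out is the choice to split at $\myd\alptil$ rather than at $\alptil$, which is precisely what the hypothesis~\eqref{eq:oracle-2} is tuned to accommodate and what allows the factor $\myd^\myb$ to appear cleanly on the outside.
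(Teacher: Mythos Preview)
Your proof is correct and follows essentially the same approach as the paper: split at $\myd\alptil$, use the first hypothesis together with $\mylambda(\alpha)\le\myd^\myb\mylambda(\alptil)$ in the small-$\alpha$ regime, and use the second hypothesis together with monotonicity of $\myphi$ in the large-$\alpha$ regime. The only cosmetic difference is that in the second regime the paper bounds $\myphi(\alptil)\le\myphi(\myd\alptil)$ rather than $\myphi(\alptil)\le\myphi(\alpha)$, which is equivalent for the purpose at hand.
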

%

\begin{proof}
Consider the case $ 0 < \alpha \le \myd\alptil $ first. From
$ \myphi(\alptil) \le \coc \frac{\delta}{\mylambda(\alptil)} $
we then obtain
\begin{align*}
\myphi(\alptil) + \frac{\delta}{\mylambda(\alptil)}
& \le
\lrc{\coc + 1}  \frac{\delta}{\mylambda(\alptil)}
\leq \myd^\myb \lrc{\coc + 1 } \frac{\delta}{\mylambda(\alpha)}
\\
& \le \myd^\myb \lrc{\coc + 1 }
\lrb{\myphi(\alpha) + \frac{\delta}{\mylambda(\alpha)}}.
\end{align*}
Next we consider the case $ \alpha \ge \myd\alptil $. Then the estimate
$ \frac{\delta}{\mylambda(\alptil)} \le
\cod \myd^\myb \myphi(\myd\alptil) $ yields
\begin{align*}
\myphi(\alptil) + \frac{\delta}{\mylambda(\alptil)}
& \le
(1+ \cod \myd^\myb) \myphi(\myd\alptil)
\le (1+ \cod \myd^\myb)
\lrb{\myphi(\alpha) + \frac{\delta}{\mylambda(\alpha)}}.
\end{align*}
This completes the proof of the lemma.
\end{proof}

For the numerical realization of the balancing principle considered below, we utilize the following finite set of regularization parameters:
\begin{align}
\label{eq:deltaset}
\Deltaset & = \inset{\alpha_0 < \alpha_1 < \cdots < \alpha_N},
\end{align}
where each element of $ \Deltaset $ as well as $ N \ge 0 $ may depend on the noise level $ \delta $.
We further assume that the elements of $ \Deltaset $ form a finite geometric sequence, i.e.,
\begin{align}
\label{eq:geometric_sequence}
\alpha_{j} = \myqj \alpha_0, \quad j = 0,1,\ldots, N, \quad \textup{with } \myq > 1,
\end{align}
where the spacing parameter $ \myq $ is assumed to be independent of $ \delta $.
We confine the search for the regularization parameter to the set
$\Deltaset $. Given a tuning parameter~$ 0 < \mygamma \le 1 $, we consider the set
\begin{align}
\label{eq:mdelta}
\Mdelta := \insetla{ \alpha \in \Deltaset : \myphi(\alpha) \le \mygamma \frac{\delta}{\mylambda(\alpha)} }.
\end{align}
The case $ \mygamma > 1 $ does not provide any improvement and thus is excluded from the considerations below.
For $ \alpha_0 $ chosen sufficiently small and $ \alpha_N $ sufficiently large in a way such that the set
$\Mdelta$ is not empty and in addition satisfies
$\Mdelta \neq \Deltaset $,
then the maximum value~$ \alptil =
\max \Mdelta $ enjoys quasi-optimality.
This immediately follows from Lemma~\ref{th:oracle-2}, applied with
$ \myd = \myq, \, \coc = \mygamma $, and $ \cod = \frac{1}{\mygamma} $.
However, such a parameter choice strategy is not implementable since
the function~$ \myphi $ is not available, in general. Thus we look
for feasible sets which contain $ \Mdelta $ and are as close as possible to
$ \Mdelta $.

For $ \coe, \cof, \cog > 0 $ fixed,
below we assume that
\begin{align}
\label{eq:alp0N-bounds}
0 < \alpha_0 \le \coe \delta^{1/\myb}, \qquad
\cof \le \alpha_N \le \cog,
\end{align}
which guarantees that a sufficiently large interval is covered by the set $ \Deltaset $
introduced in \eqref{eq:deltaset}.

Below we discuss several such balancing principles. These differ, e.g., in the number of
comparisons executed at each step, and it is seen from
the discussion in~Section~\ref{sec:miscellaneous} that by increasing the
number of comparisons we can decrease the error constant~$\cob$.

In each of the subsequent versions, the balancing is controlled
by a parameter~$ \mybeta > 0 $, therefore called \emph{balancing constant},
and it is assumed to satisfy
\begin{align}
  \label{eq:beta-def}
  \mybeta > 1 + \myqpowinv,
\end{align}
where~$q>1$ is the spacing parameter from~(\ref{eq:geometric_sequence}).
\paragraph*{{\bf First version}}
Here we consider
\begin{align} \label{eq:Hdelta-def}
\Hdelta := \insetla{ \alpha_k \in \Deltaset :
\normx{ \ualpdel[\alpha_j] - \ualpdel[\alpha_{j-1}] } \le \mybeta \frac{\delta}{\mylambda(\alpha_{j-1})}
\textup{ for any } 1 \le j \le k
}.
\end{align}

In order to find the maximum value~$\alpdel = \max \Hdelta$, we shall
start from~$k=0$, and increase~$k$ until
$\normx{\ualpdel[\alpha_{k+1}] - \ualpdel[\alpha_{k}] } > \mybeta        \frac{\delta}{\mylambda(\alpha_{k})} $ is satisfied for the first time, and take
$ \alpdel = \alpha_k $ then.
In the exceptional case that there is no such $ k \le N-1 $, it terminates with $ k = N $.
There is no need to compute the candidate approximations
$ \ualpdel[\alpha_{k+1}], \ualpdel[\alpha_{k+2}], \ldots, \ualpdel[\alpha_{N}] $
for this version.
In the present paper, our focus will be on this version.

\paragraph*{{\bf Standard version}}
The standard version of the balancing principle is related to the set
\begin{align} \label{eq:Hdeltab-def}
\Hdeltab := \insetla{ \alpha_k \in \Deltaset :
\normx{ \ualpdel[\alpha_k] - \ualpdel[\alpha_{j}] } \le \mybeta \frac{\delta}{\mylambda(\alpha_{j})}
\textup{ for any } 0 \le j < k
},
\end{align}
and it uses the maximum value~$\alpdel = \max \Hdeltab$ as regularizing parameter, cf., e.g.,
~\cite{Mathe06,PerSchock05,Pricop19}.
In order to find the maximum value~$\alpdel = \max \Hdelta$, one may
start from~$k=N$, and decrease~$k$ until the condition considered in \eqref{eq:Hdeltab-def} is satisfied for the first time.

This version of the balancing principle requires more comparisons than the first version introduced above, but on the other hand it allows to reduce the error constant. More details on the latter issue are given in Section~\ref{sec:miscellaneous}.
\paragraph*{{\bf  A third version}}
Finally we consider a variant of the standard version given through
\begin{align} \label{eq:Hdeltac-def}
\Hdeltac := \insetla{ \alpha_k \in \Deltaset :
\normx{ \ualpdel[\alpha_i] - \ualpdel[\alpha_{j}] } \le \mybeta \frac{\delta}{\mylambda(\alpha_{i})}
\textup{ for any } 0 \le i < j \le k
},
\end{align}
%
%
and consider the maximum value~$\alpdel = \max \Hdeltac$ as regularizing parameter.
This variant is considered in \cite{Plato[17]} in a special framework.
In order to find the maximum value~$\alpdel = \max \Hdeltac$, we shall
start from~$k=0$, and increase~$k$ until, for the first time, the condition
$\normx{\ualpdel[\alpha_{k+1}] - \ualpdel[\alpha_{j}] } > \mybeta        \frac{\delta}{\mylambda(\alpha_{j})} $ is satisfied for some index $ 0 < j \leq k $, and take $ \alpdel = \alpha_k $ then.
In the exceptional case that there is no such index~$ k \le N-1 $, the algorithm terminates with $ k = N $.
Only the candidate approximations
$ \ualpdel[\alpha_{0}],  \ualpdel[\alpha_{1}], \ldots,
\ualpdel[\alpha_{k}] $ have to be computed in the course of this
procedure.

Typically one expects $ \Hdeltab = \Hdeltac $, and one can show that this identity in fact holds, e.g., for Lavrentiev's method for solving linear, symmetric, positive semidefinite ill-problems. However, in general $ \Hdeltac \subset \Hdeltab $ can be guaranteed only, and the set $ \Hdeltab $ man have gaps in $ \Deltaset $. Under such general circumstances,
the standard version of the balancing procedure requires the computation of all elements
$ \ualpdel[\alpha_{0}],  \ualpdel[\alpha_{1}], \ldots, \ualpdel[\alpha_{N}] $
and consequently has a larger cost than the third version.
\begin{remark}
  In its original form, as introduced by \Lepski, the principle is based
on $\mygamma~=~1$ for the set~$\Mdelta$ from~(\ref{eq:mdelta}). In
this case the standard technique only allows choices $ \mybeta \ge 4
$.
Numerical experiments show that sometimes smaller balancing constants $ \mybeta $ produce better results.
We note that in the present paper, we verify quasi-optimality of the balancing principle for a range of balancing constants $ \mybeta $ which
is bounded from below by the number given in \eqref{eq:beta-def}.
Note that
condition~\eqref{eq:beta-def} even permits $ \mybeta $ close to $ 1 $
provided that $ \myq $ is chosen large. The latter case, however,
corresponds to a maybe undesirable coarse grid $ \Delta_\delta $. In
addition, it leads to a large error constant, as is shown below,
cf.~Proposition~\ref{th:c2_case_i} and the discussion following that
proposition.
\end{remark}

Finally we mention that within the present context, the analog of
Leonov's proposal would read as
\begin{equation*}
  \norm{ x_{\alpha_{i}}^{\delta} - x_{\alpha_{i-1}}^{\delta}}_{X} +
  \frac{\delta}{\mylambda(\alpha_{i-1})} \longrightarrow \min.
\end{equation*}
We shall establish the quasi-optimality for the first variant with
some details. The corresponding proofs for the other variants are
similar, and hence omitted. The quasi-optimality of Leonov's approach
is not clear for the present context of nonlinear ill-posed problems.

%
We start with the following observation.
\begin{lemma}
\label{th:msubh}
Suppose that the error bound~\eqref{eq:esti-error-general} holds true, where $ \mylambda(\alpha) $ is of the form \eqref{eq:lambda-def}.
In addition, let \eqref{eq:deltaset}, \eqref{eq:geometric_sequence}
and \eqref{eq:beta-def} be satisfied.
Let the tuning parameter~$ 0 <  \mygamma \leq 1$ used in the definition of the set $ \Mdelta $, cf.~\eqref{eq:mdelta}, be chosen sufficiently small such that
$ (\mygamma+1)(1+\myqpowinv) \le \mybeta $ holds. Then we
have
%
$ \Mdelta \subset \Hdelta $.
\end{lemma}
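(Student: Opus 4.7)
The plan is to fix an arbitrary $\alpha_k \in \Mdelta$ and verify the defining condition of $\Hdelta$, namely the bound
\begin{equation*}
\normx{\ualpdel[\alpha_j] - \ualpdel[\alpha_{j-1}]} \le \mybeta \frac{\delta}{\mylambda(\alpha_{j-1})}
\end{equation*}
for every $1 \le j \le k$. The approach is a straightforward triangle inequality combined with the hypothesis \eqref{eq:esti-error-general}, but first I need to propagate membership in $\Mdelta$ downwards along the geometric grid.

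First I would observe that both $\myphi$ and $\mylambda$ are non-decreasing (they are index functions, and for $\mylambda$ this is immediate from \eqref{eq:lambda-def}). Consequently, if $\alpha_k \in \Mdelta$ and $j \le k$, then $\alpha_j \le \alpha_k$ implies
\begin{equation*}
\myphi(\alpha_j) \le \myphi(\alpha_k) \le \mygamma \frac{\delta}{\mylambda(\alpha_k)} \le \mygamma \frac{\delta}{\mylambda(\alpha_j)},
\end{equation*}
so $\alpha_j \in \Mdelta$ for all $0 \le j \le k$. In particular, the error bound \eqref{eq:esti-error-general} combined with membership in $\Mdelta$ gives $\normx{\ualpdel[\alpha_j] - \ust} \le (\mygamma + 1)\,\delta/\mylambda(\alpha_j)$ for each $0 \le j \le k$.

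Next I would apply the triangle inequality to write
\begin{equation*}
\normx{\ualpdel[\alpha_j] - \ualpdel[\alpha_{j-1}]} \le \normx{\ualpdel[\alpha_j] - \ust} + \normx{\ualpdel[\alpha_{j-1}] - \ust} \le (\mygamma+1)\lrb{\frac{\delta}{\mylambda(\alpha_j)} + \frac{\delta}{\mylambda(\alpha_{j-1})}}.
\end{equation*}
By \eqref{eq:lambda-def} and \eqref{eq:geometric_sequence} one has $\mylambda(\alpha_j) = \myqpow \mylambda(\alpha_{j-1})$, so $1/\mylambda(\alpha_j) = \myqpowinv/\mylambda(\alpha_{j-1})$, and the right-hand side equals $(\mygamma+1)(1 + \myqpowinv)\,\delta/\mylambda(\alpha_{j-1})$. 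Invoking the standing assumption $(\mygamma+1)(1+\myqpowinv) \le \mybeta$ yields
\begin{equation*}
\normx{\ualpdel[\alpha_j] - \ualpdel[\alpha_{j-1}]} \le \mybeta \frac{\delta}{\mylambda(\alpha_{j-1})},
\end{equation*}
which is exactly the condition required for $\alpha_k \in \Hdelta$. Since $1 \le j \le k$ was arbitrary, this completes the inclusion.

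This argument is essentially mechanical; there is no genuine obstacle. The only point that needs some care is ensuring the bound on $\myphi$ transfers from $\alpha_k$ to each smaller $\alpha_j$, which is why I would make the monotonicity remark explicit at the very beginning. The algebraic identity $\mylambda(\alpha_j)/\mylambda(\alpha_{j-1}) = \myqpow$ from the geometric spacing and the power form of $\mylambda$ is what makes the two summands collapse into the factor $(1 + \myqpowinv)$, aligning perfectly with the lower bound imposed on $\mybeta$ in \eqref{eq:beta-def}.
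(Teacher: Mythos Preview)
Your proof is correct and follows essentially the same approach as the paper: triangle inequality through $\ust$, the membership in $\Mdelta$ to replace $\myphi(\alpha_j)$ by $\mygamma\,\delta/\mylambda(\alpha_j)$, and the geometric spacing to collapse the two terms into $(\mygamma+1)(1+\myqpowinv)\,\delta/\mylambda(\alpha_{j-1})$. The only difference is that you make explicit the monotonicity argument showing $\alpha_j\in\Mdelta$ for $j\le k$, which the paper simply asserts.
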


\begin{proof}
Let $ \alpha = \alpha_k \in \Mdelta $ and $ 1 \le j \le k $. Then $ \alpha_j, \, \alpha_{j-1} \in \Mdelta $, and thus
\begin{align*}
&
\norm{ \ualpdel[\alpha_j] - \ualpdel[\alpha_{j-1}] }_{X} \le
\norm{ \ualpdel[\alpha_j] - \ust }_{X}
+
\norm{ \ualpdel[\alpha_{j-1}] - \ust }_{X}
\\
& \quad \le
\myphi(\alpha_j) + \frac{\delta}{\mylambda(\alpha_j)}
+
\myphi(\alpha_{j-1}) + \frac{\delta}{\mylambda(\alpha_{j-1})}
\\
& \quad \le
(\mygamma+1)\Big(\frac{1}{\mylambda(\alpha_j)} +
\frac{1}{\mylambda(\alpha_{j-1})}\Big) \delta
=
(\mygamma+1)\lrc{1+\myqpowinv}
\frac{\delta}{\mylambda(\alpha_{j-1})}
\\
& \quad \le
\mybeta \frac{\delta}{\mylambda(\alpha_{j-1})}.
\end{align*}
Thus, each~$\alpha\in \Mdelta$ obeys the estimate
in~(\ref{eq:Hdelta-def}), and the proof is complete.
\end{proof}

Lemma \ref{th:msubh} and the considerations at the end of Section \ref{quasiopt}
give rise to the following a posteriori choice of the parameter $ \alpha = \alpdel $:
\begin{align}
\label{eq:lepskii}
\alpdel = \max \Hdelta.
\end{align}
\begin{theorem}
\label{th:lepskii-quasiopt}
Suppose that the error bound~\eqref{eq:esti-error-general} holds true, where $ \mylambda(\alpha) $ is of the form \eqref{eq:lambda-def}.
Let
\eqref{eq:deltaset}, \eqref{eq:geometric_sequence},
\eqref{eq:alp0N-bounds} and \eqref{eq:beta-def}
be satisfied.
Then the balancing principle~\eqref{eq:lepskii} is quasi-optimal.
%
\end{theorem}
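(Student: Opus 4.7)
The plan is to reduce the quasi-optimality of $\alpdel=\max\Hdelta$ to the already-established oracle property of the (unobservable) anchor $\alptil:=\max\Mdelta$, essentially by showing that $\alpdel$ cannot drift too far above $\alptil$ in terms of the error. Concretely, thanks to assumption~\eqref{eq:beta-def} I first fix a tuning parameter $\mygamma\in(0,1]$ so small that $(\mygamma+1)(1+\myqpowinv)\le\mybeta$, the hypothesis of Lemma~\ref{th:msubh}. Then $\Mdelta\subset\Hdelta$, and in particular $\alptil\le\alpdel$. A short inspection of the endpoints, using \eqref{eq:alp0N-bounds} together with $\myphi(\alpha_0)\to0$ as $\delta\to 0$ and $\delta/\mylambda(\alpha_N)\to 0$, shows that for sufficiently small $\delta$ one has $\alpha_0\in\Mdelta$ and $\alpha_N\notin\Mdelta$, so that $\alptil$ genuinely exists in $\Deltaset$ and $\myq\alptil\in\Deltaset$ as well.

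The bound on $\normx{\ualpdel[\alpdel]-\ust}$ is obtained via the triangle inequality
\begin{equation*}
\normx{\ualpdel[\alpdel]-\ust}\le \normx{\ualpdel[\alpdel]-\ualpdel[\alptil]}+\normx{\ualpdel[\alptil]-\ust}.
\end{equation*}
The second summand is controlled directly by \eqref{eq:esti-error-general} and $\alptil\in\Mdelta$, yielding $\normx{\ualpdel[\alptil]-\ust}\le(1+\mygamma)\delta/\mylambda(\alptil)$. For the first summand I write $\alptil=\alpha_i$, $\alpdel=\alpha_k$ with $i\le k$ and telescope: since $\alpdel\in\Hdelta$ each incremental difference satisfies $\normx{\ualpdel[\alpha_j]-\ualpdel[\alpha_{j-1}]}\le\mybeta\delta/\mylambda(\alpha_{j-1})$, and $1/\mylambda(\alpha_j)=\mykappa\myqpowinvj\alpha_0^{-\myb}$ is geometric with ratio $\myqpowinv<1$, so summation from $j=i+1$ up to $j=k$ yields
\begin{equation*}
\normx{\ualpdel[\alpdel]-\ualpdel[\alptil]}\le\frac{\mybeta}{1-\myqpowinv}\cdot\frac{\delta}{\mylambda(\alptil)}.
\end{equation*}
Hence $\normx{\ualpdel[\alpdel]-\ust}\le C_{0}\,\delta/\mylambda(\alptil)$ for a constant $C_{0}$ depending only on $\mybeta,\myq,\myb,\mygamma$.

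It then remains to recognize $\alptil$ as an oracle parameter in the sense of Lemma~\ref{th:oracle-2}: the first defining inequality in \eqref{eq:oracle-2} holds with $\coc=\mygamma$ because $\alptil\in\Mdelta$, while the second holds with $\myd=\myq$ and $\cod=1/\mygamma$ because $\myq\alptil\notin\Mdelta$ forces $\delta/\mylambda(\myq\alptil)<\myphi(\myq\alptil)/\mygamma$. Consequently $\myphi(\alptil)+\delta/\mylambda(\alptil)\le C_{1}\inf_{\alpha>0}\bigl(\myphi(\alpha)+\delta/\mylambda(\alpha)\bigr)$ for a constant $C_{1}$ supplied by Lemma~\ref{th:oracle-2}, and in particular the bare noise term $\delta/\mylambda(\alptil)$ is dominated by the same infimum. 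Chaining this with the previous paragraph gives $\normx{\ualpdel[\alpdel]-\ust}\le C_{0}C_{1}\inf_{\alpha>0}\bigl(\myphi(\alpha)+\delta/\mylambda(\alpha)\bigr)$, which is precisely \eqref{eq:quasioptimal} with $\cob=C_{0}C_{1}$. The main obstacle I foresee is not the oracle reduction itself but the clean justification that $\alptil$ is an admissible interior point of $\Deltaset$ uniformly in small~$\delta$, so that one never has to argue around the degenerate cases $\alptil=\alpha_N$ or $\Mdelta=\emptyset$; this is exactly where the quantitative endpoint conditions \eqref{eq:alp0N-bounds} are used in an essential way, and any exceptional large-$\delta$ regime can be absorbed into $\cob$.
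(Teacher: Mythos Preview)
Your proof follows essentially the same route as the paper: the telescoping bound from $\alpdel$ down to $\alptil=\max\Mdelta$ (this is the paper's inequality~\eqref{eq:mainerroresti-1} with $\coh=1+\mybeta/(1-\myqpowinv)$), combined with Lemma~\ref{th:msubh} for the inclusion $\Mdelta\subset\Hdelta$ and Lemma~\ref{th:oracle-2} for the oracle property of $\alptil$. Your main-case argument is correct and matches the paper's Case~(i).

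The only point where you are looser than the paper is the treatment of the degenerate situations $\Mdelta=\varnothing$ and $\Mdelta=\Deltaset$. You argue these cannot occur for small $\delta$ and then assert that ``any exceptional large-$\delta$ regime can be absorbed into $\cob$.'' The paper instead handles these as separate Cases~(ii) and~(iii), showing directly via~\eqref{eq:alp0N-bounds} that $\alpha_N$ (respectively $\alpha_0$) still satisfies an oracle-type estimate~\eqref{eq:oracle-2}, with constants depending on $\delta_0,\cof,\coe$ and $\myphi$. Your absorption argument can in fact be made rigorous (in Case~(iii), $\Mdelta=\varnothing$ forces $\myphi(\alpha_0)>\mygamma\mykappa/\coe^{\myb}$, hence $\alpha_0$ and $\delta$ are bounded below, so both sides of the quasi-optimality inequality are controlled), but as written it is a sketch rather than a proof, and the paper's explicit case split yields cleaner constants and is needed later in Corollary~\ref{th:quasioptimal-mod} and Proposition~\ref{th:c2_case_i}.
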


\begin{proof}
The proof will distinguish three cases, and as a preparation we first prove
the following assertion.
For any $ \alpha \in \Deltaset $ with $ \alpha \le \alpdel $, we have
\begin{align}
\label{eq:mainerroresti-1}
\normx{\ualpdel[\alpdel] -\ust} \le
\myphi(\alpha) +
\coh \frac{\delta}{\mylambda(\alpha)},
\end{align}
where $ \coh = 1+ \frac{\mybeta}{1- \myqpowinv} $.
In fact, there are indices~$0 \leq k \leq N$ and $0\leq m \leq N-k$, such that $ \alpha = \alpha_k $ and
$ \alpdel = \alpha_{k+m} $. We can bound
\begin{align*}
& \normx{\ualpdel[\alpdel] -\ust}
\le
\normx{\ualpdel[\alpha] -\ust} +
\sum_{j=0}^{m-1}
\normx{\ualpdel[\alpha_{k+j+1}] - \ualpdel[\alpha_{k+j}] }
\\
& \quad \le
\myphi(\alpha) + \frac{\delta}{\mylambda(\alpha)}
+ \mybeta \delta \sum_{j=0}^{m-1} \frac{1}{\mylambda(\alpha_{k+j})}
\\
& \quad = \
\myphi(\alpha) + \frac{\delta}{\mylambda(\alpha)}
+ \frac{\mybeta \delta}{\mylambda(\alpha)}  \sum_{j=0}^{m-1} \myqpowinvj
\le
\myphi(\alpha)
+ \Big(1+ \frac{\mybeta}{1- \myqpowinv}\Big) \frac{\delta}{\mylambda(\alpha)}.
\end{align*}
This proves~(\ref{eq:mainerroresti-1}) with constant~$\coh$ as given.
%
%
%
%
We turn to the main part of the proof.
Assume that~$\mygamma>0$ is chosen as in
Lemma~\ref{th:msubh}. Clearly, $\Mdelta \subset \Deltaset$.
\begin{description}
\item[Case (i)]
  ($\Mdelta \neq \varnothing\   \text{and}\  \Mdelta \neq \Deltaset$)

The property $\Mdelta \neq \varnothing $ allows to consider
\begin{align}
\alptil:= \max \Mdelta.
\label{eq:mainerroresti-2b}
\end{align}
From the definition of $ \Mdelta $ and the assumption $ \Mdelta \neq \Deltaset $,
we obtain
$
(\myphi \mylambda)(\alptil) \le \mygamma \delta \le (\myphi \mylambda)(\myq\alptil)
$.
Thus, by Lemma \ref{th:oracle-2}, the parameter $ \alptil $ satisfies an oracle           inequality of the form \eqref{eq:oracle}.
In addition, by Lemma~\ref{th:msubh} we have that~$\Mdelta\subset H_{\delta}$, such
that the inequality $ \alptil \le \alpdel $ holds. Quasi-optimality of $ \alpdel $ under  the current situation now immediately follows from
the error estimate \eqref{eq:mainerroresti-1} applied with $ \alpha = \alptil $.
\item[Case (ii)]($\Mdelta = \Deltaset$)
%

For $ \alptil $ given by \eqref{eq:mainerroresti-2b}, this in fact means
$ \alptil = \alpdel = \alpha_N $ and thus
\begin{align}
(\myphi \mylambda)(\alpdel) \le \mygamma \delta.
\label{eq:mainerroresti-3}
\end{align}
For the lower bound of $ (\myphi \mylambda)(\alpdel) $,
we make use of $ \alpdel \ge \cof $ which implies that
$ \myphi(\alpdel) \ge \myphi(\cof) $ as well as
$ \mylambda(\alpdel) \ge \mylambda(\cof) = \sfrac{\cof^\myb}{\mykappa} $. We therefore arrive at
\begin{align}
\label{eq:mainerroresti-4}
\delta \le \cod (\myphi \mylambda)(\alpdel)
\quad \textup{ with }
\cod = \frac{\delta_0 \mykappa}{\myphi(\cof) \cof^\myb}.
\end{align}
This implies that $ \alpdel $ satisfies an estimate of the form
\eqref{eq:oracle-2}
and thus is quasi-optimal.
This completes the considerations of the case (ii).

\item[Case (iii)]($\Mdelta = \varnothing$)
    %

  In this case we may consider~$ \alptil := \alpha_0 $. This by~\eqref{eq:alp0N-bounds} means $ \alptil = \alpha_0 \le \coe \delta^{1/\myb} $,
and thus $ \mylambda(\alptil) \le (\coe^{\myb}/\mykappa) \delta $
and $ \myphi(\alptil) \le \myphi(\coe \delta_0^{1/\myb}) $.
Then, by the definition of $ \Mdelta $, we have
\begin{align}
\label{eq:mainerroresti-7}
\mygamma \delta \le (\myphi\mylambda)(\alptil)
\le
\frac{\coe^{\myb} \myphi(\coe \delta_0^{1/\myb}) }{\mykappa} \delta.
\end{align}
This implies that $ \alptil $ satisfies an estimate of the form
\eqref{eq:oracle-2} and thus also the oracle inequality \eqref{eq:oracle}. As in case~(i),
employing estimate~(\ref{eq:mainerroresti-1}), we
deduce an estimate of the   form \eqref{eq:quasioptimal} for $ \alpdel $ for this particular case.
\end{description}
The proof of the theorem is thus completed.
%
%
\end{proof}
%
%
\begin{remark}
  We stress that the case~(i) considered in the above proof is
  prototypical.
For, if~the maximum noise level $ \delta_0 $ is sufficiently small, the cases (ii) and (iii) cannot occur.
This follows from the estimates~\eqref{eq:mainerroresti-3} and \eqref{eq:mainerroresti-7},
which lead to contradictions then, respectively.
However, larger levels $ \delta_0 $ give
rise for the cases~(ii) and~(iii), respectively.
\end{remark}

\begin{remark}
\label{th:standard_version_quasiopt}
Lemma \ref{th:msubh} and~Theorem \ref{th:lepskii-quasiopt} 
also hold for
the other two balancing principles given by the sets $ \Hdeltab $ and $ \Hdeltac $, respectively.
More precisely, the same range of balancing constants $ \mybeta $, cf.~\eqref{eq:beta-def}, and tuning parameters $ \mygamma $ may be used. In
the wording of Lemma  \ref{th:msubh}, only $ \Hdelta $ has to be replaced by $ \Hdeltac $ and $ \Hdeltab $, respectively.
In \eqref{eq:mainerroresti-1} in the proof of Theorem~\ref{th:lepskii-quasiopt}, the constant $ \coh $ may be reduced to $ \coh = 1 + \mybeta $, which in fact has an impact on the corresponding error constant $ \cob $, cf.~the discussion in Section~\ref{sec:miscellaneous} below.
\end{remark}

%
%
%
%
%

\subsection{Discussion}
\label{sec:miscellaneous}

We shall discuss several aspects concerning the balancing principles.

\paragraph*{\sl Comparison of the three considered variants of the
  balancing principle}
We continue with a comparison of the considered variants of the balancing principle. Since we have
\begin{align*}
\Mdelta \subset \Hdeltac \subset \Hdelta,
\qquad \Mdelta \subset \Hdeltac \subset \Hdeltab,
\end{align*}
the latter balancing principle, which is related to the set $ \Hdeltac $, seems to be superior to the other versions. In fact, the set $ \Hdeltac $ it closer to the oracle set $ \Mdelta $ than the other two sets $ \Hdelta $ and $ \Hdeltab $. In addition, the latter version related to the set $ \Hdeltac $ requires less computational complexity, since the number of $ \ualpdel $ to be computed does not exceed the related number for the other versions. Note that for
the classical balancing principle related to the set $ \Hdeltab $,
one always has to compute $ \ualpdel $ for each $ \alpha \in \Deltaset $.
\paragraph*{\sl Oracle property of the parameter choices}
It may be of interest to consider quasi-optimality-type estimates without assuming
\eqref{eq:alp0N-bounds}, in particular, the minimal
value~$\alpha_0$, and the maximal~$\alpha_N$ are mis-specified.
The following result is obtained as a corollary of
Theorem \ref{th:quasioptimal} and its proof.
\begin{corollary}
\label{th:quasioptimal-mod}
For any of the three considered variants of the balancing principle,
we have
\begin{align*}
\normx{\ualpdel[\alpdel] -\ust} \le
\cob \inf_{\alpha_0 \le \alpha \le \alpha_N} \lrb{
\myphi(\alpha) + \frac{\delta}{\mylambda(\alpha)}
},
\end{align*}
%
where $ \cob > 0 $ denotes some finite constant.
\end{corollary}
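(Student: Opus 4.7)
My plan is to reopen the three-case split from the proof of Theorem~\ref{th:lepskii-quasiopt} and show that assumption~\eqref{eq:alp0N-bounds} was used there only to force Case~(i); dropping it costs nothing except that the full infimum over $\alpha>0$ is replaced by the restricted infimum over $\alpha\in[\alpha_{0},\alpha_{N}]$. Throughout, I fix $\mygamma\in(0,1]$ small enough that Lemma~\ref{th:msubh} (or its analogue for $\Hdeltab,\Hdeltac$, cf.~Remark~\ref{th:standard_version_quasiopt}) gives $\Mdelta\subset\Hdelta$, so that the growth estimate~\eqref{eq:mainerroresti-1} applies whenever $\alpha\in\Deltaset$ satisfies $\alpha\le\alpdel$.

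In Case~(i), where $\Mdelta\neq\varnothing$ and $\Mdelta\neq\Deltaset$, the parameter $\alptil:=\max\Mdelta$ satisfies the hypotheses of Lemma~\ref{th:oracle-2} with $\coc=\mygamma$, $\cod=1/\mygamma$, $\myd=\myq$; the resulting oracle inequality holds for \emph{every} $\alpha>0$, hence a fortiori for every $\alpha\in[\alpha_{0},\alpha_{N}]$. Combined with $\alptil\le\alpdel$ from Lemma~\ref{th:msubh} and with~\eqref{eq:mainerroresti-1}, this reproduces the bound of Theorem~\ref{th:lepskii-quasiopt} verbatim, and this case therefore requires no change.

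Case~(ii), where $\Mdelta=\Deltaset$, forces $\alpdel=\alpha_{N}$; since $\alpha_{N}\in\Mdelta$, the basic error bound~\eqref{eq:esti-error-general} yields $\normx{\ualpdel[\alpha_{N}]-\ust}\le(1+\mygamma)\delta/\mylambda(\alpha_{N})$. By monotonicity of $\mylambda$, the factor $\delta/\mylambda(\alpha_{N})$ is dominated by $\delta/\mylambda(\alpha)$ for every $\alpha\in[\alpha_{0},\alpha_{N}]$, whence the claim follows upon passing to the infimum. The point is that here we only need to compare $\alpdel$ against parameters $\alpha\le\alpha_{N}$, so the lower bound $\alpha_{N}\ge\cof$ used in Theorem~\ref{th:lepskii-quasiopt} is no longer needed.

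I expect Case~(iii), where $\Mdelta=\varnothing$, to be the main obstacle: in the original proof this case was eliminated via the upper bound $\alpha_{0}\le\coe\delta^{1/\myb}$, which is now unavailable. My workaround will exploit that the trivial index $k=0$ always lies in $\Hdelta$, so $\alpdel\ge\alpha_{0}$; applying~\eqref{eq:mainerroresti-1} with $\alpha=\alpha_{0}$ gives $\normx{\ualpdel[\alpdel]-\ust}\le\myphi(\alpha_{0})+\coh\,\delta/\mylambda(\alpha_{0})$. The key step is then to use $\alpha_{0}\notin\Mdelta$, which is precisely the inequality $\delta/\mylambda(\alpha_{0})<\myphi(\alpha_{0})/\mygamma$; substituting this eliminates the noise contribution and bounds the error by a multiple of $\myphi(\alpha_{0})$ alone. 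Since $\myphi$ is non-decreasing, $\myphi(\alpha_{0})\le\myphi(\alpha)\le\myphi(\alpha)+\delta/\mylambda(\alpha)$ for every $\alpha\in[\alpha_{0},\alpha_{N}]$, and infimizing completes the proof. The same three-case strategy covers $\Hdeltab$ and $\Hdeltac$ upon using the reduced constant $\coh=1+\mybeta$ highlighted in Remark~\ref{th:standard_version_quasiopt}.
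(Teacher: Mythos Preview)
Your proof is correct and follows essentially the same three-case strategy as the paper's proof; the only difference is that in Cases~(ii) and~(iii) you carry out the estimates directly rather than invoking the two halves of the proof of Lemma~\ref{th:oracle-2}. Your Case~(ii) bound is in fact marginally sharper (constant $1+\mygamma$ versus the paper's $\mygamma+\coh$), since you bypass~\eqref{eq:mainerroresti-1} and use~\eqref{eq:esti-error-general} directly at $\alpdel=\alpha_N$.
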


\begin{proof}
We consider the first balancing principle only. The proofs for the
other two balancing principles are quite similar, and are left to the reader.
Below, for different situations, we verify estimates of the form
\begin{align}
\label{eq:quasioptimal-mod-1}
\normx{\ualpdel[\alpdel] -\ust} \le
\cob \inf_{\alpha \in \myI} \lrb{
\myphi(\alpha) + \frac{\delta}{\mylambda(\alpha)}
},
\end{align}
with appropriate intervals $ \myI $ and constants $ \cob $, respectively.
This in fact follows by a careful inspection of the proofs of
Lemma \ref{th:oracle-2} and Theorem~\ref{th:lepskii-quasiopt}. In the following considerations,
$ \mygamma $ denotes a constant satisfying the conditions of Lemma~\ref{th:msubh},
and for the meaning of the constant $ \coh $, we refer to \eqref{eq:mainerroresti-1}.

For case (i) considered in~Theorem \ref{th:lepskii-quasiopt}, i.e., $  \Mdelta \neq \varnothing $ and
$ \Mdelta \neq \Deltaset $, we have \eqref{eq:quasioptimal-mod-1} with
$ \myI = (0,\infty) $ and
$\cob = \myqpow\frac{\mygamma + \coh}{\mygamma} $.

For case (ii) in that theorem, i.e., $ \Mdelta = \Deltaset $, we have
$\alpdel = \alptil = \alpha_N $ and
$ \myphi(\alpdel) \le \mygamma \frac{\delta}{\mylambda(\alpdel)} $. The first part of the proof of Lemma~\ref{th:oracle-2}, applied with  $ \myd = 1 $, then gives
\eqref{eq:quasioptimal-mod-1} with
$ \myI = (0,\alpha_N] $ and $ \cob = \mygamma + \coh $.

Finally, for case (iii) considered in the theorem, i.e., $ \Mdelta = \varnothing $, we have $ \alptil = \alpha_0 $ and
$ \myphi(\alptil) \ge \mygamma \frac{\delta}{\mylambda(\alptil)} $. The second part of the proof of Lemma~\ref{th:oracle-2} then gives
\eqref{eq:quasioptimal-mod-1} with
$ \myI = [\alpha_0, \infty) $ and~$ \cob = \frac 1 {\mygamma}(\mygamma + \coh) $.
A combination of those three cases finally gives the statement of the corollary.
\end{proof}

The assertion of Corollary~\ref{th:quasioptimal-mod} may be considered
as \emph{oracle type}: If the range of
parameters~$[\alpha_{0},\alpha_{N}]$ is not specified correctly, then
the chosen parameter~$\alpha_{\ast}$ is, up to the constant~$\cob$, at
least as good as the best value within the specified range.
\paragraph*{\sl Controlling the error constant}
%
The following proposition specifies the error constant~$ \cob $ for
each of the considered balancing principles.
\begin{proposition}
\label{th:c2_case_i}
Let the maximum noise level $ \delta_0 $ be sufficiently small, and in
addition, let~\eqref{eq:geometric_sequence}, 
\eqref{eq:alp0N-bounds} and \eqref{eq:beta-def} be satisfied.
Then the error constant may be chosen as
\begin{align}
\cob =  \myqpow \frac{\mygamma + \coh}{\mygamma},
\label{eq:c2-choice}
\end{align}
where~$ \mygamma \leq \min\{\frac{\mybeta}{1+\myqpowinv}-1, 1\big\} $.
In addition,
$\coh:= 1 + \frac{\tau_{L}}{1 - \myqpowinv}$ for the
balancing principle~(\ref{eq:Hdelta-def}), and~$\coh:= 1 + \tau_{L}$ for the
versions from~(\ref{eq:Hdeltab-def}) and~(\ref{eq:Hdeltac-def}), respectively.
\end{proposition}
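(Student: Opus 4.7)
The plan is to refine the case analysis from the proof of Theorem~\ref{th:lepskii-quasiopt}, tracking constants explicitly rather than absorbing them into a generic $c_2$. First I would invoke the remark following Theorem~\ref{th:lepskii-quasiopt}: if $\delta_0$ is sufficiently small, the bounds~\eqref{eq:mainerroresti-3} and~\eqref{eq:mainerroresti-7} both force contradictions on $\delta_0$, so cases~(ii) and~(iii) are ruled out and every admissible $\delta$ falls into case~(i). Hence $\Mdelta \neq \varnothing$ and $\Mdelta \neq \Deltaset$, and I set $\alptil := \max \Mdelta$. Under the stated hypothesis on $\mygamma$ the condition $(\mygamma+1)(1+\myqpowinv)\leq \mybeta$ of Lemma~\ref{th:msubh} is satisfied, hence $\Mdelta \subset \Hdelta$ and $\alptil \leq \alpdel$.

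Second, I would feed $\alpha = \alptil$ into the telescope estimate~\eqref{eq:mainerroresti-1}, which gives
\begin{equation*}
\normx{\ualpdel[\alpdel] - \ust} \leq \myphi(\alptil) + \coh\,\frac{\delta}{\mylambda(\alptil)},
\end{equation*}
with $\coh = 1 + \mybeta/(1-\myqpowinv)$ for the first variant. The decisive step is to absorb the smoothness term using $\alptil \in \Mdelta$, i.e.\ $\myphi(\alptil) \le \mygamma\,\delta/\mylambda(\alptil)$, \emph{before} invoking any oracle dichotomy; this yields
\begin{equation*}
\normx{\ualpdel[\alpdel] - \ust} \leq (\mygamma + \coh)\,\frac{\delta}{\mylambda(\alptil)}.
\end{equation*}

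Third, I would convert $\delta/\mylambda(\alptil)$ into an oracle bound by replaying the dichotomy of Lemma~\ref{th:oracle-2}. Since $\alptil = \max \Mdelta$ and $\Mdelta \neq \Deltaset$, the element $\myq\alptil$ lies in $\Deltaset \setminus \Mdelta$, so $\delta/\mylambda(\myq\alptil) < \myphi(\myq\alptil)/\mygamma$. For any $\alpha > 0$: if $\alpha \leq \myq\alptil$ then $\mylambda(\alpha) \leq \myqpow \mylambda(\alptil)$, whence $\delta/\mylambda(\alptil) \leq \myqpow\,\delta/\mylambda(\alpha)$; if $\alpha \geq \myq\alptil$ then $\delta/\mylambda(\alptil) = \myqpow\,\delta/\mylambda(\myq\alptil) \leq (\myqpow/\mygamma)\myphi(\myq\alptil) \leq (\myqpow/\mygamma)\myphi(\alpha)$. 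Since $\mygamma \leq 1$, both branches combine to
\begin{equation*}
\frac{\delta}{\mylambda(\alptil)} \leq \frac{\myqpow}{\mygamma}\lrb{\myphi(\alpha) + \frac{\delta}{\mylambda(\alpha)}},
\end{equation*}
and taking the infimum over $\alpha > 0$ delivers the constant~\eqref{eq:c2-choice}. For the variants $\Hdeltab$ and $\Hdeltac$, the geometric sum in~\eqref{eq:mainerroresti-1} is replaced by the single direct comparison of $\ualpdel[\alpdel]$ with $\ualpdel[\alpha]$ built into~\eqref{eq:Hdeltab-def}/\eqref{eq:Hdeltac-def}, which sharpens $\coh$ to $1 + \mybeta$ as recorded in Remark~\ref{th:standard_version_quasiopt}; nothing else in the argument changes.

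The obstacle here is not conceptual but lies in resisting the coarser step $\myphi(\alptil) + \coh\,\delta/\mylambda(\alptil) \leq \coh\bigl(\myphi(\alptil) + \delta/\mylambda(\alptil)\bigr)$, which after invoking Lemma~\ref{th:oracle-2} would produce the inferior constant $\coh \myqpow(1 + 1/\mygamma)$ rather than $\myqpow(\mygamma+\coh)/\mygamma$. Exploiting membership in $\Mdelta$ \emph{before} the dichotomy, as in the second step above, is precisely what recovers~\eqref{eq:c2-choice}.
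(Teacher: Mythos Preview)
Your proposal is correct and follows essentially the same approach as the paper: reduce to case~(i) via the remark after Theorem~\ref{th:lepskii-quasiopt}, apply~\eqref{eq:mainerroresti-1} at $\alpha=\alptil$, and track constants through the dichotomy of Lemma~\ref{th:oracle-2}. The paper's only additional wrinkle is that it records the slightly sharper intermediate max-bound~\eqref{eq:c2_case_i_a} before passing to the sum form, whereas you collapse $\myphi(\alptil)$ into $(\mygamma+\coh)\,\delta/\mylambda(\alptil)$ first and then run the dichotomy; both routes produce exactly the constant~\eqref{eq:c2-choice}.
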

\begin{proof}
Under the given assumptions on $ \delta_0 $,
case (i) in the proof of Theorem~\ref{th:lepskii-quasiopt} applies,
i.e., we have
$\Mdelta \neq \varnothing $ and $ \Mdelta \neq \Deltaset$ there.
For~$ \alptil $ as in \eqref{eq:mainerroresti-2b}, an application of
the estimate in~(\ref{eq:mainerroresti-1}) for~$\alpha=\alptil$, and a careful
inspection of the proof of Lemma \ref{th:oracle-2} gives
\begin{align}
\normx{\ualpdel[\alpdel] -\ust}
& \le
\max\Big\{ \Big(1 + \frac{\myqpow\coh}{\mygamma}\Big) \myphi(\alpha), \
\myqpow (\mygamma + \coh) \frac{\delta}{\mylambda(\alpha)} \Big\}
\label{eq:c2_case_i_a}
\\
  & \le \myqpow \frac{\mygamma + \coh}{\mygamma}
\Big(\myphi(\alpha) + \frac{\delta}{\mylambda(\alpha)} \Big)
\qquad (\alpha > 0).
\nonumber
\end{align}
For the balancing principle~(\ref{eq:Hdelta-def}) this was shown to hold
for~$\coh= 1 + \frac{\tau_{L}}{1 - \myqpowinv}$. For the balancing principles
from~(\ref{eq:Hdeltab-def}) and~(\ref{eq:Hdeltac-def}) the reasoning
in Theorem~\ref{th:lepskii-quasiopt} simplifies, and the
bound~(\ref{eq:mainerroresti-1}) holds with~$\coh:= 1 + \tau_{L}$.
This completes the sketch of this proof.
\end{proof}

We note that the special form of $ \mygamma $ considered in
Proposition~\ref{th:c2_case_i} is caused by the requirement made in Lemma~\ref{th:msubh}.

We next discuss the optimal choice of the parameters used in the
balancing principle \eqref{eq:lepskii} to minimize the error constant $ \cob
$.
First, we consider the spacing parameter~$q>1$ to be fixed.
Thus, in order to minimize~$\cob$ we need to consider~$\frac{\mygamma
  + \coh}{\mygamma}$ only. The constant~$\coh$, as a function
of~$\tau_{L}$ is monotone, such that the smallest possible value
of~$\tau_{L}$ minimizes~$\coh$, and, taking into account the
requirements in Lemma~\ref{th:msubh}, we let~$\tau_{L}(\myqpow):= (\mygamma
+1)(1 + \myqpowinv)$. Now we need to distinguish the values for~$\coh$
as indicated in Proposition~\ref{th:c2_case_i}. For the first
balancing principle, based on~(\ref{eq:Hdelta-def}), we find that
\begin{equation*}
  \cob = \myqpow \frac{\mygamma + \coh}{\mygamma}
=
\frac{\mygamma + 1}{\mygamma}\frac{2 \myqpow}{ 1 - \myqpowinv}
\geq \frac{ 4\myqpow}{ 1 - \myqpowinv},
\end{equation*}
the latter being achieved for~$\mygamma=1$. This can further be
optimized with respect to the spacing parameter~$q$, and it is minimized
for~$\myqpow = \myqpow_{\text{opt}}:=2$.
With these specifications we find that
\begin{equation*}
  \tau_{L,\text{opt}}= 3,\quad \text{and}\quad c_{2,\text{opt}}= 16.
\end{equation*}
%
We next consider the size of the error constants of the other two balancing principles
related with the sets given by \eqref{eq:Hdeltab-def} and
\eqref{eq:Hdeltac-def}, respectively. In either case, the error constant $ \cob $ is again of the form
\eqref{eq:c2-choice}, with $ \coh = 1 + \mybeta $.

Thus, for $ \mybeta =
(\mygamma +1)(1+\myqpowinv) $ we have that
\begin{align*}
\cob = \myqpow \frac{\mygamma + \coh}{\mygamma}
= \myqpow \frac{\mygamma +1}{\mygamma}(2 + \myqpowinv).
\end{align*}
Again, this is minimized for~$\mygamma:= 1$, and it gives
\begin{equation}\label{eq:c2-choice-d}
c_{2,\text{opt}} =2 \lr{2 \myqpow + 1}
\end{equation}
with corresponding~$\tau_{L,\text{opt}}= 2 (1+\myqpowinv) < 4 $.
This means that the error constant becomes smaller as the grid $ \Deltaset $ becomes finer, with
$ \cob
\to 6 $ as $ \myq \to 1 $.
For the best grid independent choice of the
  parameter~$\tau_{L}$, we find that~$\tau_{L}=4$,  and hence  we recover the original Lepski\u\i{}
  principle with constant~$\cob=6q^{\varkappa}$.


\begin{remark}
The quasi-optimality results for all three methods considered in the present work can also be written in the frequently used form
\begin{align}
\label{eq:c2-choice-e}
\normx{\ualpdel[\alpdel] -\ust} \le \cob \myphi(\alptil),
\end{align}
where the parameter $ \alptil > 0 $ satisfies $ \myphi(\alptil) = \frac{\delta}{\lambda(\alptil)} $, and the error constant $ \cob $ is given by \eqref{eq:c2-choice}.
This can be seen by considering estimate \eqref{eq:c2_case_i_a} in the proof of Proposition \ref{th:c2_case_i}.
Note that we have $ \myphi(\alptil) \le
\inf_{\alpha > 0} \lrc{\myphi(\alpha) + \frac{\delta}{\mylambda(\alpha)}}
\le 2 \myphi(\alptil) $, so quasi-optimality
is in fact equivalent to
\eqref{eq:c2-choice-e} for some constant $ \cob $.

For the standard balancing principle \eqref{eq:Hdeltab-def}, utilized with the traditional balancing constant $ \mybeta = 4 $, the error constant
in estimate \eqref{eq:c2-choice-e} takes the form $ \cob = 6 \myqpow $,
which is a well-known result, cf., e.g., \cite{Mathe06,PerSchock05}.
The above discussion shows that the error constant $ \cob $
in \eqref{eq:c2-choice-e} can be reduced to the form \eqref{eq:c2-choice-d} by choosing $ \mybeta  $ somewhat smaller.
\end{remark}

\subsection{Specific impact on oversmoothing penalties}
\label{sec:impact}

After the presentation of various facets of the general theory for
balancing principles in the preceding paragraphs of this section, we
return to the specific situation
of oversmoothing penalties as outlined in Sections~\ref{intro} and~\ref{TRO-props}. To characterize the impact of the general theory on that situation, we recall the error estimate (\ref{eq:tikh-overesti-error})
with the specific index function  $ \mylambda(\alpha) =  \frac{1}{\mykappa}\alpha^{a/(2a+2)} $ and with the specific constant $ \mykappa = \max\inset{1,\frac{2}{c_a}} $.
 
In Examples~\ref{xmpl:power} and~\ref{xmpl:log} we have explicitly
described a priori parameter choice rules as well as the discrepancy
principle as an a posteriori choice rule. For the nonlinear inverse
problem~(\ref{eq:opeq}) at hand, the corresponding convergence
rates are given in~(\ref{eq:typeH2}) for the H\"older case, and~(\ref{eq:typeL}) for
the logarithmic case. Here, we complement those rate results by analog
assertions for the balancing principles.  The results are based on the
quasi-optimality of the balancing principles under consideration, 
in connection with the first part of Proposition~\ref{th:rate-guarantee}.

In case that no explicit smoothness for the solution of the nonlinear inverse problem \eqref{eq:opeq} is available, we note that any quasi-optimal rule for Tikhonov regularization with oversmoothing penalty yields convergence. For $ B^{-1} $ compact, this can be seen by consulting Example \ref{xmpl:no-smoothness}
and the second part of Proposition \ref{th:rate-guarantee}.

We briefly summarize the impact of the theory of the first
balancing principle~\eqref{eq:lepskii}, when applied to nonlinear Tikhonov
regularization with oversmoothing penalty, by the following corollary.
Note that the assertions of the corollary can be formulated in an
analog manner for the other two balancing principles, and we refer to Remark~\ref{th:standard_version_quasiopt} above. 

\begin{corollary}
Consider nonlinear Tikhonov regularization with oversmoothing penalty as introduced in Sections~\ref{intro} and~\ref{TRO-props}, with the regularization parameter
$ \alpstar $ determined by the balancing principle \eqref{eq:lepskii} under the required conditions (\ref{eq:alp0N-bounds}) and  (\ref{eq:beta-def}).
For H\"older-type smoothness (\ref{eq:typeH1}) as considered in Example \ref{xmpl:power}, with $ 0 < p \le 1 $, one has
\begin{align*}
\|x^\delta_{\alpha_{\ast}}-\xdag\|_X=\mathcal O\lrc{\delta^{\frac p{a+p}}} \quad \text{as  } \ \delta \to 0.
\end{align*}
Similarly, for logarithmic source conditions as considered in Example~\ref{xmpl:log},
we obtain logarithmic rates
\begin{align*}
\|x^\delta_{\alpha_{\ast}}-\xdag\|_X= \mathcal O\lrc{\log^{-\mymu}(1/\delta)} \quad \text{as } \ \delta \to 0.
\end{align*}
\end{corollary}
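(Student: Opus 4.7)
The plan is to combine Theorem~\ref{thm:general}, which yields the fundamental error decomposition of the form \eqref{eq:esti-error-general}, with the quasi-optimality of the balancing principle established in Theorem~\ref{th:lepskii-quasiopt}, and then to invoke part~(1) of Proposition~\ref{th:rate-guarantee} to transport rates from a convenient a priori reference choice to $\alpstar$.

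First, I would check the structural hypotheses of Theorem~\ref{thm:general}. For the H\"older source condition \eqref{eq:typeH1} with $0 < p \leq 1$, the index function $\mypsi(t)=t^{p/(2a+2)}$ satisfies $\mypsi^{2a+2}(t)=t^p$, which is sub-linear on $(0,\infty)$; for the logarithmic source condition of Example~\ref{xmpl:log}, the function $\mypsi(t)=K\log^{-\mymu}(1/t)$ is by construction an index function whose $(2a+2)$-th power is sub-linear. In either case, Theorem~\ref{thm:general} supplies the error bound \eqref{eq:tikh-overesti-error} with $\mylambda(\alpha)=\alpha^{a/(2a+2)}/\mykappa$, which is precisely of the form \eqref{eq:lambda-def} with $\myb=a/(2a+2)$. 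Under the hypotheses \eqref{eq:alp0N-bounds} and \eqref{eq:beta-def}, Theorem~\ref{th:lepskii-quasiopt} then guarantees that $\alpstar = \max \Hdelta$ is quasi-optimal with respect to this error bound.

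Next I would exhibit an a priori parameter $\alptil=\alptil(\delta)$ that realizes the claimed rate by equilibrating the two summands of \eqref{eq:tikh-overesti-error}. For the H\"older case, the choice $\alptil := \delta^{(2a+2)/(a+p)}$ from Example~\ref{xmpl:power} renders both $c_1\mypsi(\alptil) = c_1\,\delta^{p/(a+p)}$ and $\delta/\mylambda(\alptil) = \mykappa\,\delta^{p/(a+p)}$ of order $\delta^{p/(a+p)}$. For the logarithmic case, $\alptil := \delta$ from Example~\ref{xmpl:log} yields $c_1\mypsi(\alptil) = c_1 K\log^{-\mymu}(1/\delta)$ together with the polynomially decaying term $\delta/\mylambda(\alptil) = \mykappa\,\delta^{(a+2)/(2a+2)}$, the latter being dominated by the former as $\delta \to 0$. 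In each case the sum therefore equals $\mathcal O(\varrho(\delta))$ with $\varrho(\delta)=\delta^{p/(a+p)}$ and $\varrho(\delta)=\log^{-\mymu}(1/\delta)$, respectively.

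To conclude, I would apply part~(1) of Proposition~\ref{th:rate-guarantee} with the above reference rule $\alptil$: quasi-optimality of $\alpstar$ automatically transfers these rates to $\alpstar$, yielding the two claimed asymptotics. The argument is mainly bookkeeping; the only conceptual point requiring care is the verification of sub-linearity of $\mypsi^{2a+2}$ in each regime, together with the observation that the reference rules $\alptil$ need not belong to the discrete grid $\Deltaset$ since Proposition~\ref{th:rate-guarantee}(1) only compares with the continuous infimum appearing in the quasi-optimality estimate \eqref{eq:quasioptimal}.
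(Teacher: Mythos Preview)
Your proposal is correct and follows essentially the same route as the paper's own proof: quasi-optimality of the balancing principle (Theorem~\ref{th:lepskii-quasiopt}) together with the a priori rate results of Examples~\ref{xmpl:power} and~\ref{xmpl:log}, combined via part~(1) of Proposition~\ref{th:rate-guarantee}. You simply spell out the sub-linearity checks and the explicit balancing of the two summands, whereas the paper compresses these into a single sentence citing the examples.
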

\begin{proof}
This is an immediate  consequence of quasi-optimality of the balancing principle \eqref{eq:lepskii},  and of the convergence rate results
in Examples~\ref{xmpl:power} and \ref{xmpl:log} in connection with the first part of Proposition~\ref{th:rate-guarantee}.
\end{proof}
We explicitly highlight the following fact, intrinsic in the proof: If the
  parameter~$\alpha_{\ast}(\delta)$, obtained by the a priori choice,
  cf.~Examples~\ref{xmpl:power}--\ref{xmpl:no-smoothness}, is in the
  interval~$[\alpha_{0}(\delta),\alpha_{N}(\delta)]$, then the corresponding
  convergence rates for the parameter choice according to the
  balancing principle are valid. Otherwise
  Corollary~\ref{th:quasioptimal-mod} applies.

As already noticed, the study~\cite{Pricop19} by Pricop-Jeckstadt is
close to our approach on balancing principles. However, it does not
include the case of oversmoothing penalties, a gap which is closed
here. Despite the fact that the nonlinearity requirements
of~\cite{Pricop19} are slightly different, the main difference lies in
the following fact: The proofs (for the non-oversmoothing case) in ibid.
are based on an error decomposition into a noise amplification error, and a bias that occurs when the data
are noise-free. This technique fails in the oversmoothing case, where
instead a certain auxiliary element is used. 
\section{Exponential growth model: properties and numerical case
  study} \label{example}
For a case study we shall collect the theoretical properties of the
exponential growth model, first presented in~\cite[Section
3.1]{Groe93}. More details about properties of the nonlinear forward
map~$F$ as in~(\ref{eq:F}), below, can be found in \cite{Hof98}.  Then we
highlight different behavior for the reconstruction in the
oversmoothing and non-oversmoothing cases, respectively.
\subsection{Properties}\label{properties}
For analytical and numerical studies we are going to exploit the exponential growth model
\begin{equation} \label{eq:ode}
y'(t)=x(t)\,y(t) \quad (0 \le t \leq 1), \qquad y(0)=1,
\end{equation}
considered in the Hilbert space $L^2(0,1)$. The inverse problem
consists in the identification of the square-integrable time-dependent
function~$x(t)\;(0 \le t \le 1)$ in (\ref{eq:ode}) from noisy data~$\yd\in L^2(0,1)$
of the solution $y(t)\;(0 \le t \le 1)$ to the corresponding initial value O.D.E. problem.
 In this context, we suppose a deterministic noise
 model~$\|\yd-y\|_{L^2(0,1)} \le \delta$ with noise level $\delta>0$.
This identification problem can be written in form of an operator equation (\ref{eq:opeq}) with the nonlinear forward operator
\begin{equation}\label{eq:F}
[F(x)](t)= \exp \; \left(\int  _{0}^{\,t} x(\tau) d\tau\right)\qquad (0 \leq t \leq 1)
\end{equation}
mapping in $L^2(0,1)$ with full domain $\domain(F)=L^2(0,1)$.

Evidently, $F$ is globally {\sl injective}. One can also show  on
the one hand that $F$ is {\sl weakly sequentially continuous} and  on
the other hand that~$F$ is {\sl Fr\'echet differentiable} everywhere. It possesses for all $x \in L^2(0,1)$ the Fr\'echet derivative
$F^\prime(x)$, explicitly given as
\begin{equation} \label{eq:Fprime}
[F^\prime (x) h](t)=[F(x)](t) \int_0^{\,t} h(\tau)d\tau  \quad (0 \le t \le 1,\;\; h \in L^2(0,1)).
\end{equation}
This Fr\'echet derivative is a compact linear mapping in $L^2(0,1)$,
because it is a composition $F^\prime(x)=M \circ J$ of the bounded
linear multiplication operator $M$ mapping in $L^2(0,1)$ defined  as
$$
[Mg](t)=[F(x)](t)\,g(t)\;(0 \le t \le 1),
$$
and the compact linear integration operator $J$ mapping in $L^2(0,1)$ defined as
\begin{equation}\label{eq:J}
[Jh](t)=\int_0^{\,t} h(\tau)d\tau  \quad (0 \le t \le 1).
\end{equation}
We mention that the continuous multiplier function $F(x)$ in $M$ is bounded below and above by finite positive values due to
\begin{equation} \label{eq:Mbounds}
\exp(-\|x\|_{L^2(0,1}) \le [F(x)](t) \le \exp(\|x\|_{L^2(0,1)}) \  (0
\le t \le 1),
\end{equation}
 for~$x \in L^2(0,1)$, and hence~$F(x) \in L^\infty(0,1)$.

Let us denote by
  $$
\mathcal{B}_r(\xdag)=\{z \in
L^2(0,1):\,\|z-\xdag\|_{L^2(0,1)} \le r\},
$$
the closed ball with radius $r>0$ and center $\xdag$.

The following lemma highlights, that $F$ satisfies a nonlinearity
condition of tangential cone-type.
\begin{lemma} \label{lem:TCC}
For the nonlinear operator $F$ from (\ref{eq:F}), the inequality
\begin{multline} \label{eq:TCC}
  \|F(x)-F(\xdag)-F^\prime(\xdag)(x-\xdag)\|_{L^2(0,1)} \\
  \le \|x-\xdag\|_{L^2(0,1)}\,\|F(x)-F(\xdag)\|_{L^2(0,1)}
\end{multline}
is valid for all $x,\xdag  \in L^2(0,1)$. Consequently, we have for arbitrary but fixed $r>0$ and $\xdag \in L^2(0,1)$ the inequality
\begin{equation} \label{eq:LeftTCC}
\frac{1}{1+r}\,\|F^\prime(\xdag)(x-\xdag)\|_{L^2(0,1)} \le \|F(x)-F(\xdag)\|_{L^2(0,1)}
\end{equation}
 for all $x \in \mathcal{B}_r(\xdag)$. Moreover,   whenever
 $0<r<1$ we have that
\begin{equation} \label{eq:RightTCC}
\|F(x)-F(\xdag)\|_{L^2(0,1)} \le \frac{1}{1-r}\,\|F^\prime(\xdag)(x-\xdag)\|_{L^2(0,1)}
\end{equation}
 for all $x \in \mathcal{B}_r(\xdag)$.
\end{lemma}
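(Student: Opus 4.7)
The plan is to reduce the tangential-cone inequality \eqref{eq:TCC} to a pointwise scalar inequality about the exponential, then bound the remaining factor via Cauchy--Schwarz, and finally read off \eqref{eq:LeftTCC}, \eqref{eq:RightTCC} from \eqref{eq:TCC} by two applications of the triangle inequality.

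Concretely, first I would introduce $u(t):=\int_0^t x(\tau)\,d\tau$ and $u^\dagger(t):=\int_0^t \xdag(\tau)\,d\tau$, so that by \eqref{eq:F} and \eqref{eq:Fprime} one has the pointwise identities $F(x)(t)=e^{u(t)}$, $F(\xdag)(t)=e^{u^\dagger(t)}$, and $[F'(\xdag)(x-\xdag)](t)=e^{u^\dagger(t)}\bigl(u(t)-u^\dagger(t)\bigr)$. Setting $a=u(t)$, $b=u^\dagger(t)$, the integrand of the left-hand side of \eqref{eq:TCC} becomes $e^b\bigl(e^{a-b}-1-(a-b)\bigr)$, while the right-hand side involves the pointwise factor $|e^a-e^b|=e^b|e^{a-b}-1|$. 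Thus \eqref{eq:TCC} would follow from the elementary scalar inequality
\begin{equation*}
|e^s-1-s|\le |s|\,|e^s-1| \qquad \text{for all } s\in\reza,
\end{equation*}
which I would verify by distinguishing $s\ge 0$ and $s<0$ and using the monotonicity of $1-e^{-|s|}\le |s|$, or equivalently by the integral representation $e^s-1-s=\int_0^s(e^r-1)\,dr$ combined with the monotonicity of $r\mapsto e^r-1$ on the relevant interval.

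Having this pointwise bound in hand, the passage to \eqref{eq:TCC} itself rests on the observation that for $0\le t\le 1$ Cauchy--Schwarz gives
\begin{equation*}
|u(t)-u^\dagger(t)|=\Bigl|\int_0^t(x-\xdag)(\tau)\,d\tau\Bigr|\le \sqrt{t}\,\|x-\xdag\|_{L^2(0,1)}\le \|x-\xdag\|_{L^2(0,1)},
\end{equation*}
so that the pointwise factor can be pulled out as a uniform constant before integrating the squared modulus and taking square roots; this delivers \eqref{eq:TCC} at once. The only place that needs some care is the passage $|e^s-1-s|\le|s||e^s-1|$ for $s<0$, where one must check that the sign handling is correct; this is the main (minor) obstacle.

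For the remaining two assertions I would simply bound, using \eqref{eq:TCC} and $\|x-\xdag\|_{L^2}\le r$,
\begin{equation*}
\bigl|\,\|F(x)-F(\xdag)\|_{L^2}-\|F'(\xdag)(x-\xdag)\|_{L^2}\,\bigr|\le r\,\|F(x)-F(\xdag)\|_{L^2}.
\end{equation*}
Solving the resulting one-sided inequalities for $\|F(x)-F(\xdag)\|_{L^2}$ gives the upper bound \eqref{eq:RightTCC} when $r<1$ (so that $1-r>0$), and an analogous rearrangement gives the lower bound \eqref{eq:LeftTCC} for any $r>0$. No further assumption on the multiplier function $F(\xdag)$, nor on the bounds \eqref{eq:Mbounds}, is needed at this stage.
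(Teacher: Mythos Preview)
Your proposal is correct and follows essentially the same route as the paper's proof: the paper writes $\theta(t)=\int_0^t(x-\xdag)$, reduces \eqref{eq:TCC} to the scalar inequality $|e^\theta-1-\theta|\le|\theta|\,|e^\theta-1|$, bounds $|\theta(t)|\le\|x-\xdag\|_{L^2(0,1)}$ by Cauchy--Schwarz, and then obtains \eqref{eq:LeftTCC} and \eqref{eq:RightTCC} from the triangle inequality exactly as you indicate.
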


\begin{proof}
By setting $\theta(t):=[J(x-\xdag)](t)\;(0 \le t \le 1)$ we have
$$[F(x)-F(\xdag)](t)=[F(\xdag)](t)\,(\exp(\theta(t))-1) $$
and
$$[F(x)-F(\xdag)-F^\prime(\xdag)(x-\xdag)](t)=[F(\xdag)](t)\,(\exp(\theta(t))-1-\theta(t)).$$
Then the general estimate
$$|\exp(\theta)-1-\theta| \le |\theta|\,|\exp(\theta)-1|, $$
which is valid for all $-\infty<\theta<+\infty$, leads to
$$
|[F(x)-F(\xdag)-F^\prime(\xdag)(x-\xdag)](t)| \le
|\theta(t)|\,|[F(x)-F(\xdag)](t)|,
$$
for all~$0 \leq t \leq 1$.
By using the Cauchy--Schwarz inequality this implies
$$|[F(x)-F(\xdag)-F^\prime(\xdag)(x-\xdag)](t)| \le
\|x-\xdag\|_{L^2(0,1)}, $$
again, for~$0 \leq t \leq 1$.
This yields the inequality (\ref{eq:TCC}).  Both
inequalities~(\ref{eq:LeftTCC}) and~(\ref{eq:RightTCC}) are immediate consequences
by applying the triangle inequality.
The proof of the lemma is complete.
\end{proof}

We are going to establish that the forward operator $F$ from
(\ref{eq:F}) obeys the nonlinearity condition~(\ref{eq:twosided}). To
this end we use the Hilbert scale model as introduced in
Section~\ref{sec:scales}. Here, the Hilbert scale $\{X_\tau\}_{\tau \in \mathbb{R}}$ with $X_0=X=L^2(0,1),$ $X_\tau=\domain(B^\tau)$ for $\tau>0$ and $X_\tau=X$ for $\tau<0$, is generated by the unbounded, self-adjoint, and positive definite linear operator
\begin{equation} \label{eq:altB}
B:=(J^*J)^{-1/2}
\end{equation}
induced by the integration operator $J$ from (\ref{eq:J}). The domain $\domain(B)$ of~$B$ is dense in $L^2(0,1)$ and its range $\mathcal{R}(B)$ coincides with $L^2(0,1)$. For each $\tau \in \mathbb{R}$ one can define the norm
$$\|x\|_\tau:=\|B^\tau x\|_{L^2(0,1)} \quad \mbox{defined for all} \quad x \in X_\tau.$$
The powers of $B$ are linked to the powers of $J$. By analyzing the Riemann--Liouville fractional integral operator $J^p$ for levels $p$
from the interval $(0,1]$ we have that
\[
X_p=\domain(B^{p})=\mathcal{R}((J^*J)^{p/2}) \quad \mbox{for}\quad 0<p \le 1.
\]
Due to~\cite[Lemma~8]{GorYam99} this gives the explicit representation
\begin{equation} \label{eq:fracrange}
X_p=\left\{\;\begin{array}{ccc} H^p(0,1) &\quad \mbox{for}\quad & 0<p<\frac{1}{2}\\ \{x \in H^{\frac{1}{2}}(0,1):\int\limits_0^1 \frac{|x(t)|^2}{1-t}dt<\infty \}&\quad\mbox{for}\quad& p=\frac{1}{2}
\\ \{x \in H^p(0,1):\, x(1)=0\}   &\quad \mbox{for} \quad& \frac{1}{2}<p \le 1 \end{array} \right.,
\end{equation}
where $H^p(0,1)$ denotes the corresponding fractional hilbertian Sobolev space.
Note that for $0<p<\frac{1}{2}$ the spaces~$X_p$ and the hilbertian Sobolev spaces $H^p(0,1)$ coincide, whereas for $p>\frac{1}{2}$ an additional homogeneous boundary condition occurs at the right end of the interval.

Using the Hilbert scale introduced above we have collected now all
ingredients for verifying an inequality chain of type
(\ref{eq:twosided}) with a degree $a=1$ of ill-posedness.

We start with
$$\|Jh\|_{L^2(0,1)}=\|(J^*J)^{1/2}h\|_{L^2(0,1)}=\|B^{-1}h\|_{L^2(0,1)}=\|h\|_{-1}$$
valid for all~$ h \in L^2(0,1) $, 
and we aim at applying Lemma~\ref{lem:TCC}. In this context, we set on the one hand
$k_0:=\exp(-\|\xdag\|_{L^2(0,1)})$, $K_0:=\exp(\|\xdag\|_{L^2(0,1)})$
and on the other hand $c_{1}:=\frac{k_0}{1+r}$ and $C_{1}:=
\frac{K_0}{1-r}$. Then we have $0< k_0 \le [F(\xdag)](t) \le K_0 < \infty$
from (\ref{eq:Mbounds}). By using formula~(\ref{eq:Fprime}) we obtain, for all  $x \in L^2(0,1)$ and $r>0$,
the estimate
\begin{equation} \label{eq:left}
c_{1}\|x-\xdag\|_{-1} = \frac{k_0}{1+r}\,\|J(x-\xdag)\|_{L^2(0,1)} \le \frac{1}{1+r}\,\|F^\prime(\xdag)(x-\xdag)\|_{L^2(0,1)},
\end{equation}
whenever~$x\in \mathcal{B}_r(\xdag)$.
In the same manner one deduces that for all  $x \in L^2(0,1)$, and
$0<r<1$,   the right-side estimate
$$
\frac{1}{1-r}\,\|F^\prime(\xdag)(x-\xdag)\|_{L^2(0,1)} \le
\frac{K_0}{1-r}\,\|J(x-\xdag)\|_{L^2(0,1)} =C_{1}\|x-\xdag\|_{-1}
$$
holds true whenever~$x\in \mathcal{B}_r(\xdag)$. By Lemma~\ref{lem:TCC} this yields the inequality chain
\begin{equation*} 
c_{1}\,\|x-\xdag\|_{-1} \le \|F(x)-F(\xdag)\|_{L^2(0,1)}\le C_{1}\,\|x-\xdag\|_{-1} \;\; \mbox{for}\; \; x \in \mathcal{B}_r(\xdag),
\end{equation*}
with~$0 < r < 1. $
This proves the assertion of the following proposition.
\begin{proposition}\label{pro:example}
For $X=Y=L^2(0,1)$ we consider the nonlinear operator $F$ from
(\ref{eq:F}). Its  domain~$\domain(F)$ is restricted to a closed ball
$\mathcal{B}_r(\xdag)$ around some element~$\xdag\in X$,
and with radius radius $r<1$.

Within  the Hilbert scale generated by the operator $B$ from
(\ref{eq:altB}), induced by the integration operator $J$ from
(\ref{eq:J}), the operator~$F$ obeys the nonlinearity condition~(\ref{eq:twosided}) with
$a=1$. The positive constants $c_1$ and~$C_1$
depend on $\xdag$ and $r$.
\end{proposition}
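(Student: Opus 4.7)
The plan is to combine the tangential cone inequalities of Lemma~\ref{lem:TCC} with the factorization $F'(\xdag) = M \circ J$ from the discussion following (\ref{eq:Fprime}), and with the identification of the Hilbert scale norm $\|\cdot\|_{-1}$ induced by $B = (J^*J)^{-1/2}$. These three ingredients reduce the two-sided inequality (\ref{eq:twosided}) to a direct chain of multiplicative bounds; no spectral calculus or fractional Sobolev machinery is needed at this step.

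First I would spell out the norm identification. By definition of the Hilbert scale generated by $B$, one has $B^{-1} = (J^*J)^{1/2}$, hence
\begin{equation*}
\|h\|_{-1} \;=\; \|B^{-1}h\|_{L^2(0,1)} \;=\; \|(J^*J)^{1/2}h\|_{L^2(0,1)} \;=\; \|Jh\|_{L^2(0,1)} \qquad (h \in L^2(0,1)).
\end{equation*}
Applied with $h = x - \xdag$, this re-expresses the negative-order norm simply as the $L^2$ norm of the antiderivative $J(x-\xdag)$.

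Next I would insert the multiplicative structure. From (\ref{eq:Fprime}), $F'(\xdag) = M \circ J$ with $M$ the multiplication operator by $F(\xdag)$, and (\ref{eq:Mbounds}) gives the uniform two-sided bound $0 < k_0 \leq [F(\xdag)](t) \leq K_0 < \infty$ with $k_0 := \exp(-\|\xdag\|_{L^2(0,1)})$ and $K_0 := \exp(\|\xdag\|_{L^2(0,1)})$. Consequently
\begin{equation*}
k_0 \,\|x-\xdag\|_{-1} \;\leq\; \|F'(\xdag)(x-\xdag)\|_{L^2(0,1)} \;\leq\; K_0 \,\|x-\xdag\|_{-1}
\end{equation*}
for every $x \in L^2(0,1)$. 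Restricting now to $x \in \mathcal{B}_r(\xdag)$ with $0 < r < 1$ and invoking (\ref{eq:LeftTCC}) and (\ref{eq:RightTCC}) of Lemma~\ref{lem:TCC}, one obtains
$\|F(x) - F(\xdag)\|_{L^2(0,1)} \geq \tfrac{1}{1+r}\|F'(\xdag)(x-\xdag)\|_{L^2(0,1)} \geq \tfrac{k_0}{1+r}\|x-\xdag\|_{-1}$
as well as
$\|F(x) - F(\xdag)\|_{L^2(0,1)} \leq \tfrac{1}{1-r}\|F'(\xdag)(x-\xdag)\|_{L^2(0,1)} \leq \tfrac{K_0}{1-r}\|x-\xdag\|_{-1}$,
which is precisely (\ref{eq:twosided}) with $a = 1$, $c_1 = k_0/(1+r)$ and $C_1 = K_0/(1-r)$.

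The only delicate point is the norm identification $\|h\|_{-1} = \|Jh\|_{L^2(0,1)}$ together with the implicit fact that $\mathcal{R}(B) = L^2(0,1)$ so that this equality is well-defined for arbitrary $h \in L^2(0,1)$; both are standard for $B = (J^*J)^{-1/2}$ but ought to be recorded explicitly. Once this is in place, the remainder is a three-line composition of the tangential cone estimate with the multiplicative bound on the integration operator, and the restriction $r < 1$ enters solely to make the upper tangential cone inequality (\ref{eq:RightTCC}) available.
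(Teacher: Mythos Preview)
Your proof is correct and follows essentially the same route as the paper: the identification $\|h\|_{-1}=\|Jh\|_{L^2(0,1)}$, the two-sided multiplicative bound on $F(\xdag)$ from (\ref{eq:Mbounds}), and the tangential cone inequalities (\ref{eq:LeftTCC})--(\ref{eq:RightTCC}) of Lemma~\ref{lem:TCC}, combined to yield exactly the constants $c_1=k_0/(1+r)$ and $C_1=K_0/(1-r)$. The paper's argument is organized in the same order with the same ingredients; your additional remark about $\mathcal{R}(B)=L^2(0,1)$ is a helpful clarification that the paper states earlier without further comment.
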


Next we discuss assertions on missing stability and well-posedness for the operator equation (\ref{eq:opeq}).

\begin{proposition}\label{pro:locallyill} For arbitrary solution $\xdag \in L^2(0,1)$
the operator equation (\ref{eq:opeq}) with $F:\domain(F)=X=L^2(0,1)
\to Y=L^2(0,1)$  from (\ref{eq:F})  is {\sl locally ill-posed}
(cf.~\cite[Definition~3]{HofPla18}). It is {\sl not stably solvable} at $y$ (cf.~\cite[Definition~1]{HofPla18}) for arbitrary right-hand element $y \in \mathcal{R}(F):=\{z \in L^2(0,1):\,z=F(\xi),\;\xi \in L^2(0,1)\}$  from the range of $F$.
\end{proposition}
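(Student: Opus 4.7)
The plan is to exploit two ingredients already established in the paper: the right-hand inequality \eqref{eq:RightTCC} from Lemma~\ref{lem:TCC}, and the compactness of the Fr\'echet derivative $F'(\xdag) = M \circ J$, which stems from the compactness (and injectivity) of the Volterra integration operator $J$ defined in~\eqref{eq:J}. The basic idea is that a compact operator on the infinite-dimensional space $L^2(0,1)$ cannot be bounded below, so one can find a unit-norm sequence in $L^2(0,1)$ sent by $F'(\xdag)$ to zero; inequality \eqref{eq:RightTCC} then transfers this asymptotic behaviour from the linearization to the nonlinear map $F$ itself.

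Concretely, I would first produce a sequence $(h_n) \subset L^2(0,1)$ with $\|h_n\|_{L^2(0,1)} = 1$ and $\|F'(\xdag) h_n\|_{L^2(0,1)} \to 0$. Since $J$ is a compact injective self-mapping of $L^2(0,1)$, one may take $(h_n)$ to be the orthonormal system of singular vectors of $J$ (equivalently, an orthonormal basis of eigenvectors of $J^*J$); the associated singular values tend to zero. Boundedness of the multiplier $M$ then gives $\|F'(\xdag) h_n\|_{L^2(0,1)} \le \|M\|\,\|J h_n\|_{L^2(0,1)} \to 0$.

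Next, fix any $0 < \epsilon < 1$ and set $x_n := \xdag + \epsilon h_n$. Then $x_n \in L^2(0,1) = \mathcal{D}(F)$, $\|x_n - \xdag\|_{L^2(0,1)} = \epsilon$ for every $n$, so in particular $x_n \not\to \xdag$, yet $x_n \in \mathcal{B}_\epsilon(\xdag)$ with $\epsilon < 1$. Inequality~\eqref{eq:RightTCC} applied with $r := \epsilon$ yields
\begin{equation*}
\|F(x_n) - F(\xdag)\|_{L^2(0,1)} \le \frac{1}{1-\epsilon}\,\|F'(\xdag)(x_n-\xdag)\|_{L^2(0,1)} = \frac{\epsilon}{1-\epsilon}\,\|F'(\xdag) h_n\|_{L^2(0,1)} \longrightarrow 0.
\end{equation*}
Since $\epsilon$ may be chosen arbitrarily small, this construction furnishes, for every $r > 0$, a sequence in $\mathcal{B}_r(\xdag) \cap \mathcal{D}(F)$ with $F(x_n) \to F(\xdag)$ while $x_n \not\to \xdag$, which is exactly the definition of local ill-posedness at $\xdag$ in the sense of \cite[Definition~3]{HofPla18}.

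For the claim about stable solvability, let $y \in \mathcal{R}(F)$ and invoke global injectivity of $F$ to obtain the unique preimage $\xdag \in L^2(0,1)$ with $F(\xdag)=y$. The sequence $y_n := F(x_n) \in \mathcal{R}(F)$ constructed above converges to $y$ in $L^2(0,1)$, while the unique preimages $x_n = F^{-1}(y_n)$ fail to converge to $\xdag = F^{-1}(y)$, contradicting stable solvability at $y$ in the sense of \cite[Definition~1]{HofPla18}. I do not anticipate a real obstacle: all necessary machinery (global injectivity of $F$, compactness and injectivity of $J$, and the tangential cone estimate~\eqref{eq:RightTCC}) has been prepared earlier; the only step requiring a small argument is the assertion that compact operators on infinite-dimensional Hilbert space are not bounded below, which is standard.
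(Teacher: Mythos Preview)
Your argument is correct but takes a different route from the paper. The paper does not invoke the tangential cone estimate~\eqref{eq:RightTCC} at all; instead it observes that $F$ is \emph{strongly continuous} (weak-to-norm continuous) as a consequence of the compactness of $J$, and then takes any orthonormal system $\{e_n\}$, so that $x_n:=\xdag+r e_n \rightharpoonup \xdag$ forces $\|F(x_n)-F(\xdag)\|\to 0$ directly. Your approach linearizes first: you use compactness of $J$ only to produce a unit sequence with $\|F'(\xdag)h_n\|\to 0$, and then transfer this to the nonlinear map via~\eqref{eq:RightTCC}. Both routes rest ultimately on the compactness of $J$, but yours makes the mechanism more explicit and reuses machinery (Lemma~\ref{lem:TCC}) already proved in full, whereas the paper's version appeals to strong continuity as a black box. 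The treatment of stable solvability is essentially identical in both.
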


\begin{proof}
One can easily show that, as a consequence of the compactness of $J$ in $L^2(0,1)$, the operator  $F:\domain(F)=X=L^2(0,1) \to Y=L^2(0,1)$  from (\ref{eq:F}) is {\sl strongly continuous} in
the sense of Definition~26.1~(c) from \cite{Zeidler90}. Thus weakly convergent sequences $x_n \rightharpoonup x_0$ in $L^2(0,1)$ imply
norm convergent image sequences such that $\lim_{n \to
  \infty}\|F(x_n)-F(x_0)\|_{L^2(0,1)}=0$. For any orthonormal system
$\{e_n\}_{n=1}^\infty$ in $L^2(0,1)$ and any radius $r>0$ we then have
$x_n=\xdag+r e_n \rightharpoonup \xdag, \; x_n \in
\mathcal{B}_r(\xdag)$, and $\lim_{n \to
  \infty}\|F(x_n)-F(x_0)\|_{L^2(0,1)}~=~0$. On the one hand this
proves the local ill-posedness at $\xdag$. On the other hand, since
$F$ is injective with the inverse operator $F^{-1}: \mathcal{R}(F)
\subset L^2(0,1) \to L^2(0,1)$ , the mapping~$F^{-1}$ cannot be continuous at $y=F(\xdag)$, which contradicts the
stable solvability of the operator equation at $y$. The proof is complete.
\end{proof}

Note that for injective forward operators stable solvability and continuity of the inverse operator coincide. Moreover note that in the Hilbert space $L^2(0,1)$ the strong continuity of $F$ implies that $F$ is compact (cf.~\cite[Proposition~26.2]{Zeidler90}).

\begin{remark}
Due to the local ill-posedness of $F$  defined by formula (\ref{eq:F}) and mapping from $X_0=X=L^2(0,1)$ to $Y=L^2(0,1)$ with the associated norm topologies, we have that for arbitrarily small $r>0$ there is no constant $c_0>0$ such that
$$c_0\,\|x-\xdag\|_{L^2(0,1)} \le \|F(x)-F(\xdag)\|_{L^2(0,1)} \;\; \mbox{for all}\; \; x \in  \mathcal{B}_r(\xdag). $$
However, if the weaker $X_{-1}$-norm $\|\cdot\|_{-1}=\|B^{-1} \cdot\|_{L^2(0,1)}=\|J \cdot\|_{L^2(0,1)} $  applies for the pre-image space of $F$, one can see from the estimates~(\ref{eq:LeftTCC}) and  (\ref{eq:left}) that for all $r>0$ there exists a constant
$c_{1}>0$ depending on $\xdag$ and $r$ such that
$$c_{1}\,\|x-\xdag\|_{-1} \le \|F(x)-F(\xdag)\|_{L^2(0,1)} \;\; \mbox{for all}\; \; x \in \mathcal{B}_r(\xdag), $$
which proves that (\ref{eq:opeq}) is {\sl locally well-posed} everywhere for that norm pairing. For the convergence theory of Tikhonov regularization in Hilbert scales in case of oversmoothing penalties, however,
this requires an a priori restriction of the domain $\domain(F)$ to bounded sets (balls), because (\ref{eq:twosided}) is originally needed in \cite{HofPla20} with respect to that example with the full domain $\domain(F)=L^2(0,1)$.

On the other hand, there exists no global constant $c_{1}>0$ depending only on $\xdag$ such that
$c_{1}\,\|x-\xdag\|_{-1} \le \|F(x)-F(\xdag)\|_{L^2(0,1)} $ for each $ x \in  L^2(0,1) $.
This follows, for any $ \xdag $, by considering, e.g., the functions
$$ x(t) = x_n(t) \equiv -n \ \textup{ for } \ n= 1,2, \ldots \ . $$
In fact, we then have~$[Jx_n](t) = -nt $
and $ [F(x_n)](t) = \exp(-nt) $, and thus
\begin{align*}
\|Fx_n \|_{L^2(0,1)}^2 = \frac{1}{2n} (1-e^{-2n}) \to 0,\intertext{but}
\|x_n\|_{-1} =\|Jx_n \|_{L^2(0,1)} = \frac{n}{\sqrt{3}} \to \infty,
\end{align*}

as $ n  \to \infty $.

The restriction to a small ball with radius~$r<1$ for the right-hand
inequality of (\ref{eq:twosided}), as
caused by the condition (\ref{eq:RightTCC}), is not problematic for the theory (cf.~\cite{HofMat18,HofPla20}). This part is only used by auxiliary elements that are close to $\xdag$ for sufficiently small
regularization parameters $\alpha>0$ whenever $\xdag$ is supposed to be an interior point of $\domain(F)$, which is trivial for  $\domain(F)=X$.
\end{remark}

The character of ill-posedness of the operator equation
(\ref{eq:opeq}) with $F$ from (\ref{eq:F}) can be illustrated at the
exact solution
\begin{equation} \label{eq:constantone}
\xdag(t) \equiv 1 \quad(0 \le t \le 1),
\end{equation}
which will be used in the numerical study below. In this case we have $[F(\xdag)](t)=\exp(t) \;(0 \le t \le 1)$. If we perturb this exact right-hand side by a continuously differentiable noise function $\eta(t)\;(0 \le t \le 1)$ with $\eta(0)=0$, then the pre-image of $F(\xdag)+\eta$ attains the explicit form
\begin{equation*}
x_\eta(t)= \frac{\exp(t)+\eta^\prime(t)}{\exp(t)+\eta(t)} \quad (0 \le t \le 1).
\end{equation*}
In particular, for $\eta(t)=\delta \,\sin(nt) \;(0 \le t \le 1)$ with multiplier $\delta>0$ and $\|\eta\|_{L^2(0,1)} \le \delta$ we have $x_\eta=x_n$ defined as
\begin{equation} \label{etasin}
x_n(t)= \frac{\exp(t)+n\,\delta\,\cos(nt)}{\exp(t)+\delta\,\sin(nt)} \quad (0 \le t \le 1),
\end{equation}
as well as $\|F(x_n)-F(\xdag)\|_{L^2(0,1)} \le \delta$ for all $n \in \mathbb{N}$ and $\delta>0$.
The next proposition emphasizes for $F$ from (\ref{eq:F}) that in spite of very small image deviations $\|F(x)-F(\xdag)\|_{L^2(0,1)}$ the corresponding error norm $\|x-\xdag\|_{L^2(0,1)}$ can be arbitrarily large.

\begin{proposition}\label{pro:oneexplode}
For arbitrarily small $\delta>0$ the pre-image set~$F^{-1}(\mathcal{B}_\delta(F(\xdag)))$ for $\xdag$ from (\ref{eq:constantone}) is not bounded in $L^2(0,1)$, i.e.~there exist sequences
$\{x_n\}_{n=1}^\infty \subset F^{-1}(\mathcal{B}_\delta(F(\xdag)))$ with $\lim_{n \to \infty}\|x_n\|_{L^2(0,1)}=+\infty$.
\end{proposition}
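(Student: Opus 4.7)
The plan is to use the explicit sequence $\{x_n\}_{n\in\mathbb{N}}$ already written down in formula~\eqref{etasin}. Since the text has verified that $\|F(x_n)-F(\xdag)\|_{L^2(0,1)}\le\delta$ for every $n$, the sequence lies in the pre-image set $F^{-1}(\mathcal{B}_\delta(F(\xdag)))$. What remains is therefore to show that $\|x_n\|_{L^2(0,1)}\to\infty$ as $n\to\infty$.

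For this I would restrict to $0<\delta<1$ (permitted by the wording ``arbitrarily small $\delta>0$''), and split $x_n=N_n/D_n$ with numerator $N_n(t):=e^t+n\delta\cos(nt)$ and denominator $D_n(t):=e^t+\delta\sin(nt)$. The denominator is then bounded uniformly in $n$ and $t$ by
\[
 0 < 1-\delta \;\le\; D_n(t) \;\le\; e+\delta,
\]
so one gets the pointwise lower bound $|x_n(t)|\ge |N_n(t)|/(e+\delta)$ and consequently
\[
  \|x_n\|_{L^2(0,1)}^2 \;\ge\; \frac{1}{(e+\delta)^2}\,\|N_n\|_{L^2(0,1)}^2.
\]

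Next I would expand
\[
 \|N_n\|_{L^2(0,1)}^2 \;=\; \int_0^1 e^{2t}\,dt \;+\; 2n\delta\int_0^1 e^t\cos(nt)\,dt \;+\; n^2\delta^2 \int_0^1\cos^2(nt)\,dt.
\]
The first term is a positive constant. A single integration by parts shows that $n\int_0^1 e^t\cos(nt)\,dt$ stays bounded as $n\to\infty$, so the middle term is $O(1)$. The final integral equals $\tfrac{1}{2}+\tfrac{\sin(2n)}{4n}$, so the third term is asymptotic to $\tfrac{n^2\delta^2}{2}$. Combining these with the previous display yields $\|x_n\|_{L^2(0,1)}^2 \ge \frac{n^2\delta^2}{2(e+\delta)^2}(1+o(1))\to\infty$, proving the claim.

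There is no genuine obstacle; the proof is essentially bookkeeping, since the construction of a sequence with images close to $F(\xdag)$ has already been performed just before the proposition. The only mild subtlety is ensuring that $D_n$ stays strictly positive so that $x_n$ is a well-defined $L^2$-function, for which the restriction $\delta<1$ is enough and is consistent with the statement.
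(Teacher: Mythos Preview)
Your proof is correct and follows essentially the same route as the paper: both use the explicit sequence~\eqref{etasin}, bound the denominator $D_n$ uniformly, and then show that the numerator contributes a term of order $n^2\delta^2\int_0^1\cos^2(nt)\,dt\sim n^2\delta^2/2$. The only minor difference is that you expand $\|N_n\|_{L^2}^2$ into three pieces and control the cross term via an integration by parts, whereas the paper passes directly to the lower bound $n^2\delta^2\int_0^1\cos^2(nt)\,dt/(e^2(1+\delta)^2)$; your version is slightly more explicit at that step, but the argument is the same in substance.
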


\begin{proof}
To this end we shall use the explicit sequence
$\{x_n\}_{n=1}^\infty$ from (\ref{etasin})
, for which the estimate
$$\|x_n\|^2_{L^2(0,1)}=\int_0^1  \frac{(\exp(t)+n\,\delta\,\cos(nt))^2}{(\exp(t)+\delta\,\sin(nt))^2}dt =\int_0^1 \frac{(1+n\,\delta\,\frac{\cos(nt)}{\exp(t)})^2}{(1+\delta\,\frac{\sin(nt)}{\exp(t)})^2}dt $$
$$\ge \frac{n^2\delta^2 \int_0^1 (\cos(nt))^2 dt }{e^2(1+\delta)^2}=  \frac{n^2\delta^2 }{e^2(1+\delta)^2}\left(\frac{1}{2}+\frac{\sin(n)\cos(n)}{2n} \right) $$
holds true. Consequently, we have $\|x_n\|_{L^2(0,1)} \ge \frac{n \delta}{2e(1+\delta)}$ for sufficiently large $n \in \mathbb{N}$, such that $\lim_{n \to \infty}\|x_n\|_{L^2(0,1)}=+\infty$.
\end{proof}
\subsection{Numerical case study}\label{numerics}
The following numerical case study operates in the setting of
Section~\ref{properties} and complements the theoretical
results. Therefore we minimize the Tikhonov functional of
type~\eqref{eq:tikhonov} with $\bar{x}=0$ and forward
operator~\eqref{eq:F} derived from the exponential growth
model. Recall that the inequality chain~\eqref{eq:twosided} holds with
degree of ill-posedness $a=1$. Further, set $X=Y=L^2(0,1)$. To obtain
the $X_1$-norm in the penalty, use $\|\cdot \|_1=\|\cdot\|_{H^1(0,1)}$
and additionally enforce the boundary condition $x(1)=0$ in accordance
with the construction of the Hilbert scale~\eqref{eq:fracrange}. In
all experiments we use the exact solution
$\xdag(t)=1; \; (0 < t \leq 1)$. As this particular exact solution is
smooth, but violates the boundary condition $\xdag(1) = 0$, we have
$\xdag \in X_p$ for all $0<p<\frac{1}{2}$. This means, a H\"older-type
source condition as outlined in Example~\ref{xmpl:power} holds.
A discretization level of $N=1000$ in the time domain is
used. The noise is then constructed by sampling one realization of
  a vector~$\xi=(\xi_{1},\dots,\xi_{1000})$ of $1000$ i.i.d. standard
  Gaussian random variables. This is then normalized to
  have~$\|\xi\|_{L^2(0,1)}=1$, and $\delta \xi$ added to the exact
  data~$y$.  For noise level~$\delta$ this yields
\begin{math}
  \|y-\yd\|_{Y} = \delta.
\end{math}
The minimization problem itself is solved using the
MATLAB\textregistered -routine \texttt{fmincon}. Integrals are
discretized using the trapezoid rule.

Further, a modification
of the first variant~\eqref{eq:Hdelta-def} of the balancing principle is
implemented.
Precisely, we modify $\Hdelta$ introduced in formula~(\ref{eq:Hdelta-def}) as
\begin{align}\label{eq:bal-mod-bernd}
\Hdelta^{mod} := \insetla{ \alpha_k \in \Deltaset :
\normx{ \ualpdel[\alpha_j] - \ualpdel[\alpha_{j-1}] } \le C_{BP}\, \frac{\delta}{\alpha_{j-1}^{a/(2a+2)}}
\textup{ for} 1 \le j \le k
}.
\end{align}
This is necessary, because the constant~$K_{2}$, which is required to be known
in~(\ref{eq:lambda-def}), is not available. So,  we use the
constant~$C_{BP}$ as a replacement for~$\tau_{L}K_{2}$, instead. Then we set $\alpha_*=\alpha_{BP}:=\max \Hdelta^{mod}.$

Since $x^\dag$ is known we can compute the
regularization errors $\|x_\alpha^\delta-x^\dag\|_X$. This can be
interpreted as a function of $\delta$ and justifies a regression for
the model function
\begin{equation}\label{eq:regression_model_x}
  \|x_\alpha^\delta-x^\dag\|_X\leq c_x \delta^{\kappa_x}.
\end{equation}
Similarly we estimate the asymptotic behavior of the selected
regularization parameter through the ansatz
\begin{equation}\label{eq:regression_model_alpha}
  \alpha\sim c_\alpha \delta^{\kappa_\alpha}.
\end{equation}
Both exponents $\kappa_x$ and $\kappa_\alpha$ as well as the
corresponding multipliers $c_x$ and $c_\alpha$ are obtained using a
least squares regression based on samples for varying $\delta$.

In a first case study we consider several constants $C_{BP}$ used in
the balancing principle, the results of which are displayed in
Table~\ref{tab:rates_BP}.  Recall the results of
Theorem~\ref{thm:general} as well as Example~\ref{xmpl:power}: if a
H\"older-type source condition holds, we expect
$\kappa_x=\frac{p}{a+p}$. As $a=1$ and $p \approx \frac{1}{2}$, this
corresponds very well with the numerical observations in the third
column of Table~\ref{tab:rates_BP}. Further recall the a priori
parameter choice $\alpha_*=\delta^{\frac{2a+2}{a+p}}$, which here
reads as~$\alpha_*=\delta^{\frac{8}{3}}$. The fifth column of
Table~\ref{tab:rates_BP} shows the resulting $\alpha-$rates for the
balancing principle. We therefore conclude that the resulting rates
coincide with this particular a priori choice.
\begin{table}[h!]
  \begin{center}
    \begin{tabular}{ p{1.5cm} p{1.5cm} p{1.5cm} p{1.5cm} p{1.5cm}}
      $C_{BP}$ & $c_x$ & $\kappa_x$ &   $c_\alpha$ & $\kappa_\alpha$ 
      \\ \hline
      0.02  & 0.5275 & 0.3373 & 3.3750  & 3.0000  \\
      0.05  & 0.5241 & 0.3337 & 2.2241  & 2.8613 \\
      0.1  & 0.7188 & 0.3426 & 5.8352  & 2.5925  \\\hline
    \end{tabular}
    \vspace{5mm}
    \caption{Exponential growth model with
      $\xdag(t)\equiv 1; \; (0 < t \leq 1)$. Numerically computed
      results for the balancing principle~\eqref{eq:Hdelta-def}
      yielding multipliers and exponents of regularization
      error~\eqref{eq:regression_model_x} and
      $\alpha$-rates~\eqref{eq:regression_model_alpha} for various
      $C_{BP}$.}
    \label{tab:rates_BP}
  \end{center}
\end{table}

Figure~\ref{fig:BP_ratecomp}
\begin{figure}[h!]
  \begin{center}
    \subfigure{\includegraphics[width=0.48\textwidth]{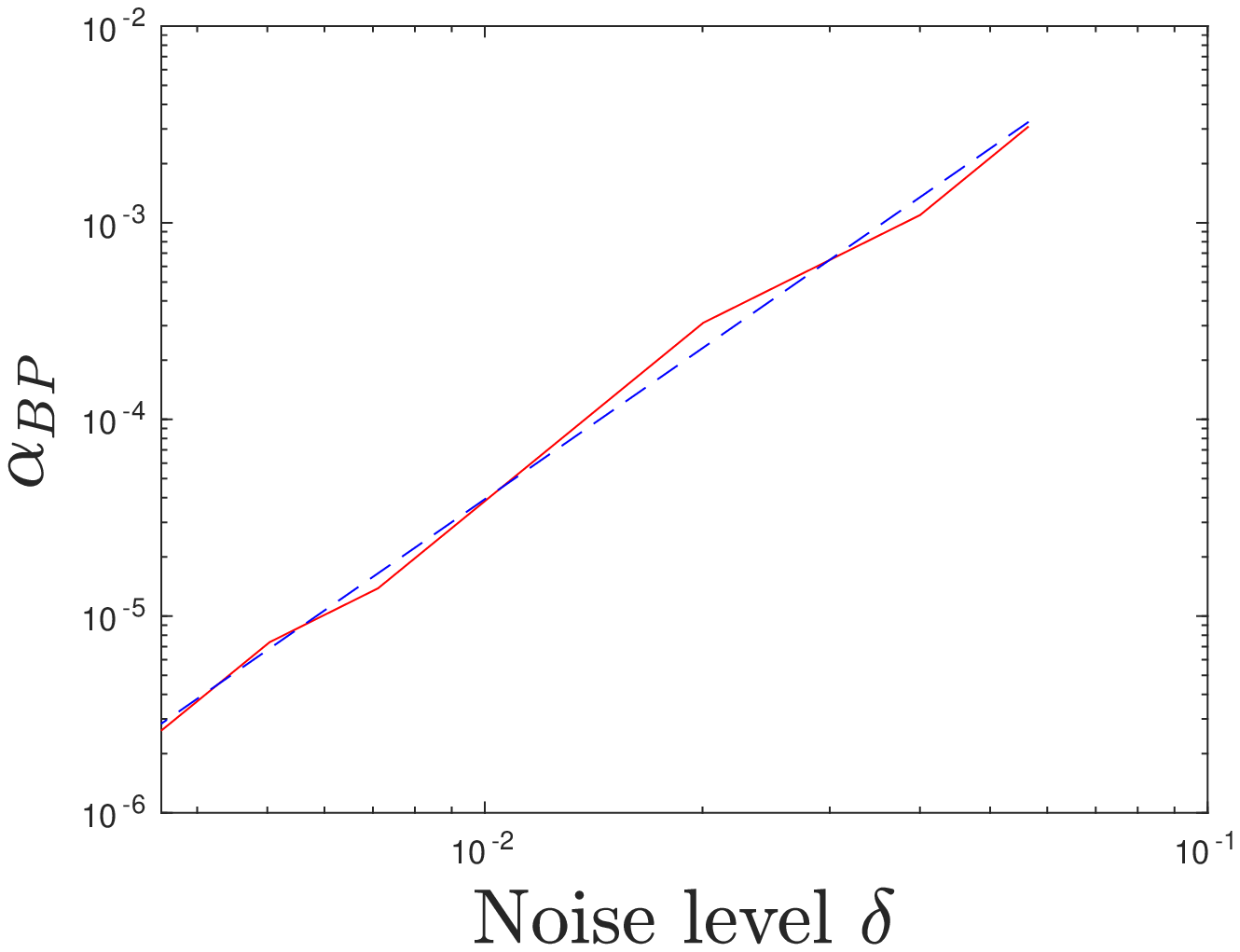}}
    \subfigure{\includegraphics[width=0.48\textwidth]{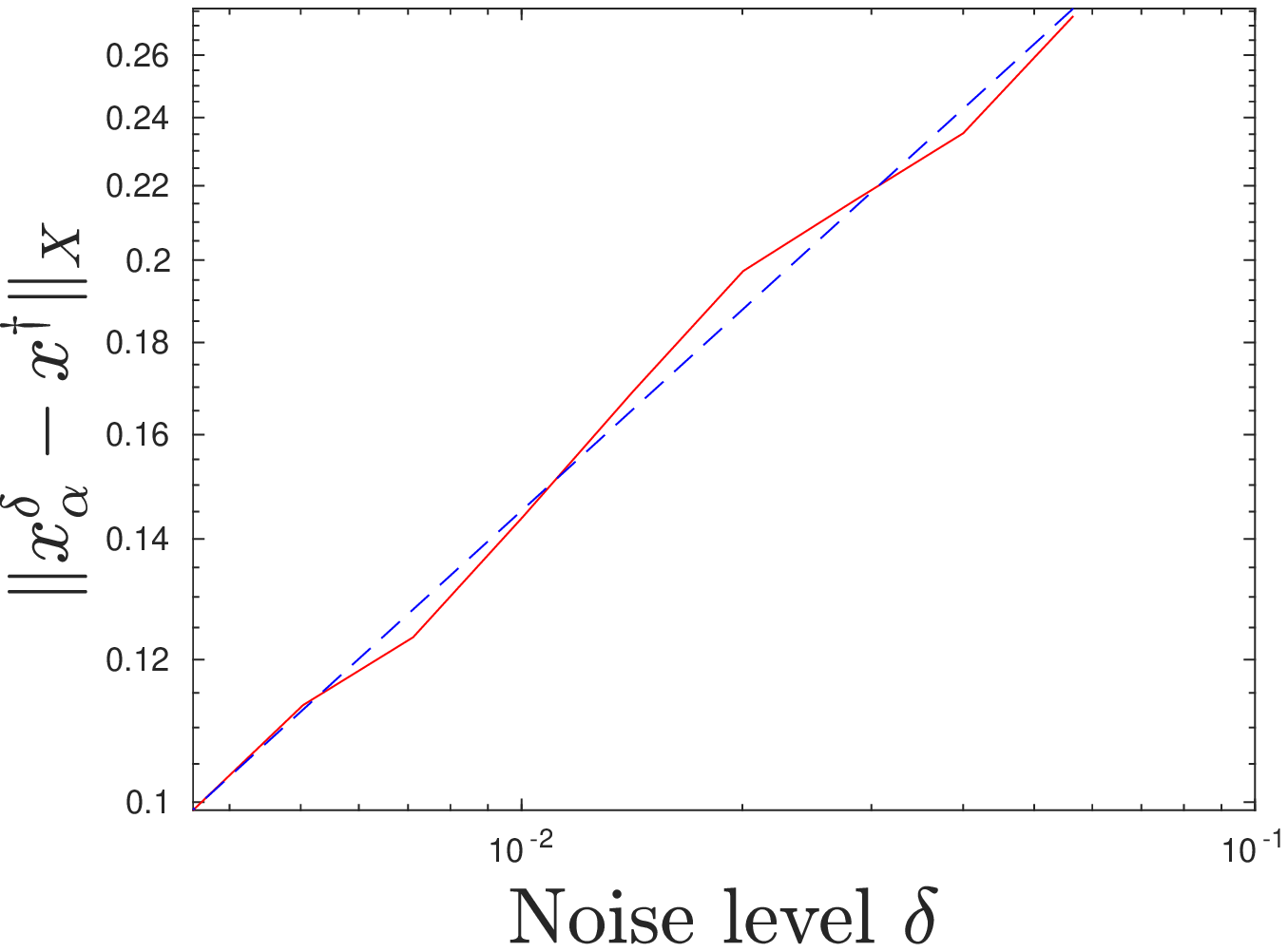}}
    \caption{Exponential growth model with
      $\xdag(t)\equiv 1; \; (0 < t \leq 1)$ and parameter choice using
      the balancing principle~\eqref{eq:Hdelta-def} with
      $C_{BP}=0.1$. $\alpha_{\scriptstyle BP}$ in red for various
      $\delta$ and best approximating regression line in blue/dashed
      on a log-log scale (left) and approximation error
      $\|\xad-\xdag\|_X$ in red and approximate rate in blue/dashed
      (right).}
    \label{fig:BP_ratecomp}
  \end{center}
\end{figure}
shows the realized regularization parameters
$\alpha_{\scriptstyle BP}$ (left) obtained by the balancing principle and the corresponding regularization errors (right) as well as their
respective approximations. We observe an excellent fit which confirms
our confidence in this approach, and in the implementation.

Next we fix the noise level~$\delta$ and compare various parameter
choice rules. Besides the balancing principle, we consider a
discrepancy principle, where the parameter is chosen a posteriori such
that
\begin{equation} \label{eq:intr_DP} \|F(\xad)-\yd\|_Y =
  C_{\scriptstyle DP}\, \delta
\end{equation}
for given noise level $\delta$ and suitable constant
$C_{\scriptstyle DP}>0$ . Recall, that this parameter choice rule also
yields the order optimal convergence rate.  Additionally we study the
heuristic parameter choice originally developed by Tikhonov, Glasko
and Leonov\footnote{The authors in the respective publications call
  this parameter choice quasioptimality. In order to avoid ambiguity
  with respect to Definition~\ref{th:quasioptimal} we avoid this
  terminology here, but denote the resulting regularization parameter
  as $\alpha_{\scriptstyle QO}$.}~\cite{Leonov91,TikGla64} violating
the Bakushinskij veto established in \cite{Baku84}.  \ Hence we
consider a sequence of regularization parameters
\begin{equation}\label{eq:qo_parseries}
  \alpha_k=\alpha_0 q^{k} \colon j=0,1,\ldots,M
\end{equation}
for $q>0$, some suitable $\alpha_0$ and appropriate $M$. Here,
$x_{\alpha_k}^{\delta}$ denotes the regularized solutions to the
functional \eqref{eq:tikhonov} with regularization parameter
${\alpha_k}$.  Using the series of parameters~\eqref{eq:qo_parseries}
the suitable regularization parameter according to this parameter
choice rule is then chosen by minimizing
\begin{equation*}
  \| x_{\alpha_{k+1}}^{\delta} - x_{\alpha_k}^{\delta} \|_X \rightarrow \min \quad 1\leq k \leq M-1
\end{equation*}
with respect to $\alpha_k$.  Moreover consider
$\alpha_{\scriptstyle \myopt}$ which minimizes the error
\mbox{$\| \xad-\xdag\|_X$}, i.e.,
\begin{equation}\label{eq:alpha_opt}
  \alpha_{\scriptstyle \myopt} := \min_{\alpha} \| \xad-\xdag\|_X
\end{equation}
assuming the exact solution is known.

All of the above parameter choices are visualized
\begin{figure}[h!]
  \begin{center}
    \subfigure{\includegraphics[width=1\textwidth]{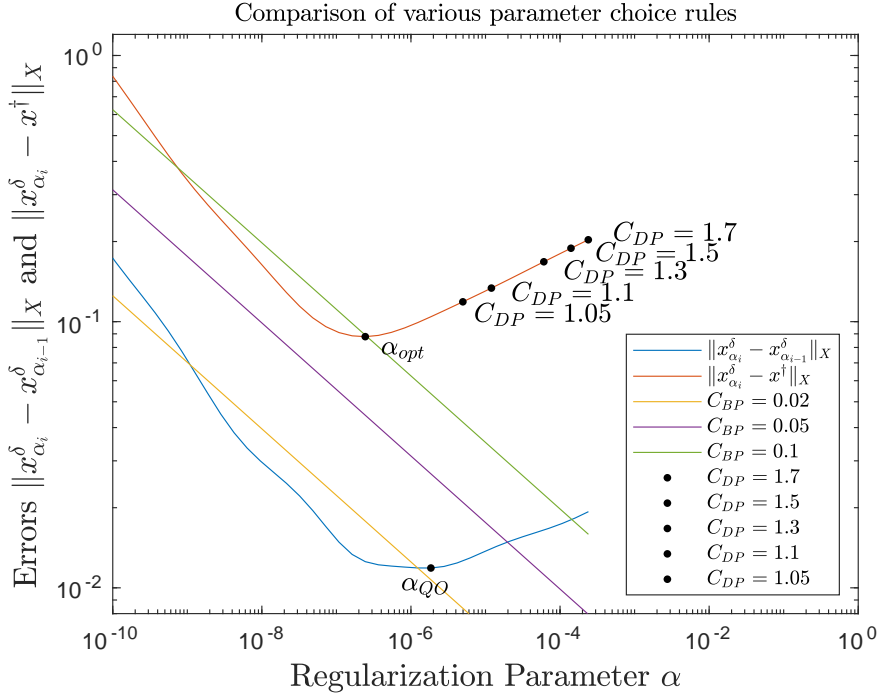}}
    \caption{Exponential growth model with
      $\xdag(t)\equiv 1; \; (0 < t \leq 1)$ and
      $\delta=0.0179$. Visualization of
      $\| x_{\alpha_{k+1}}^{\delta} - x_{\alpha_{k}}^{\delta} \|_X$
      and $\| \xad-\xdag\|_X$ as well as parameter choice using the
      balancing principle, discrepancy principle, quasi-optimality and
      $\alpha_{\scriptstyle \myopt}$ for various $C_{BP}$ and
      $C_{\scriptstyle DP}$.}
    \label{fig:BP_parcompDP}
  \end{center}
\end{figure}
for~$\delta=0.0179$  in
Figure~\ref{fig:BP_parcompDP}. Regularization error $\| \xad-\xdag\|_X$
and the term $\|x_{\alpha_i}^{\delta}-x_{\alpha_{i-1}}^{\delta}\|_X$
are plotted for various regularization parameters on a log-log
scale. Their respective minima are marked as parameter choices
$\alpha_{\scriptstyle \myopt}$ and $\alpha_{\scriptstyle QO}$. The
colored graphs correspond to the right hand term of the balancing
principle~(\ref{eq:bal-mod-bernd}) for different choices of $C_{BP}$. The
parameter choice rule can then be interpreted in the following way:
the balancing principle chooses the largest regularization parameter
from the admissible set, such that the left hand term
in~(\ref{eq:bal-mod-bernd}) is less or equal to the right hand
side. Visually speaking this means choosing the regularization
parameter at the intersect or just below the intersect of the blue and
colored lines, again depending on $C_{BP}$. Larger $C_{BP}$ leads to
larger regularization parameters and vice versa. The resulting
parameters for parameter choice using the discrepancy principle and
the respective regularization error are also marked for various
$C_{\scriptstyle DP}$.

In this situation, with fixed noise level, we observe that the
heuristic parameter choice $\alpha_{\scriptstyle QO}$ performs
surprisingly well. Parameter choice using the discrepancy principle
follows our intuition: for smaller constants~$C_{\scriptstyle DP}$ the
regularization error decreases and vice versa. The success of the
balancing principle highly depends on the choice $C_{BP}$. Although
theoretical results on the choice of this constant exist it is
difficult to chose this accordingly in practice.

Finally we highlight the differences between oversmoothing to
non-oversmoothing penalties. We therefore remain in the same setting
and consider regularized solutions
$\xdag(t)\equiv 1 \; (0 < t \leq 1)$ with
$\xdag \in X_p \; (0<p<\frac{1}{2})$ (oversmoothing case) and
\mbox{$\hat{x}^{\dag}(t)=-(t-\frac{1}{2})^2+\frac{1}{4} \; (0 < t
  \leq 1)$} with $\hat{x}^{\dag} \in X_1$ (non-oversmoothing case). We
again minimize the Tikho\-nov functional~\eqref{eq:tikhonov} for various
parameter choice rules. Understand that $\hat{x}^{\dag} \in X_p$ for
some $p>1$ and therefore the penalty is not oversmoothing. The exact
and regularized solution for various parameter choice are visualized
in Figure~\ref{fig:regsol_comp}. The left column considers $\xdag$,
the right column $\hat{x}^{\dag}$. Parameter choice
$\alpha \approx 9.52\mye{-09}$ is chosen a priori and too small in both
instances. Regularized solutions are displayed in the first row. We
therefore see highly oscillating regularized solutions and an
insufficient noise suppression. In the second row,
$\alpha \approx 2.44\mye{-07}$ is the optimal parameter choice in the sense
of~\eqref{eq:alpha_opt} for the oversmoothing situation. Similarly,
$\alpha \approx 2.12\mye{-05}$ (third row) is the optimal parameter choice
for the non oversmoothing situation. We see that the first parameter
choice leads to highly oscillating regularized solutions in the non
oversmoothing case.  Further, we obverse a phenomenon inherent to
regularization with oversmoothing regularization (left): when
comparing the parameter choices $\alpha \approx 2.44\mye{-07}$ and
$\alpha \approx 2.12\mye{-05}$ it becomes evident that the regularized
solution for the first parameter choice oscillates mildly, while the
latter appears much smoother. This occurs, as regularized solution
have to adhere to the boundary condition $\xdag(1)=0$. Therefore a
trade off between noise suppression and boundary condition
occurs. This is in agreement with results in~\cite{HofMat18,HofPla20}.
The fourth column shows the regularized solutions for too large
regularization parameters $\alpha \approx 1.60\mye{-04}$. Noise is
effectively suppressed, but in both instances the regularized
solutions are too smooth.

\begin{figure}[h!]
  \begin{tabular}{c c}
    \includegraphics[width=0.5\linewidth]{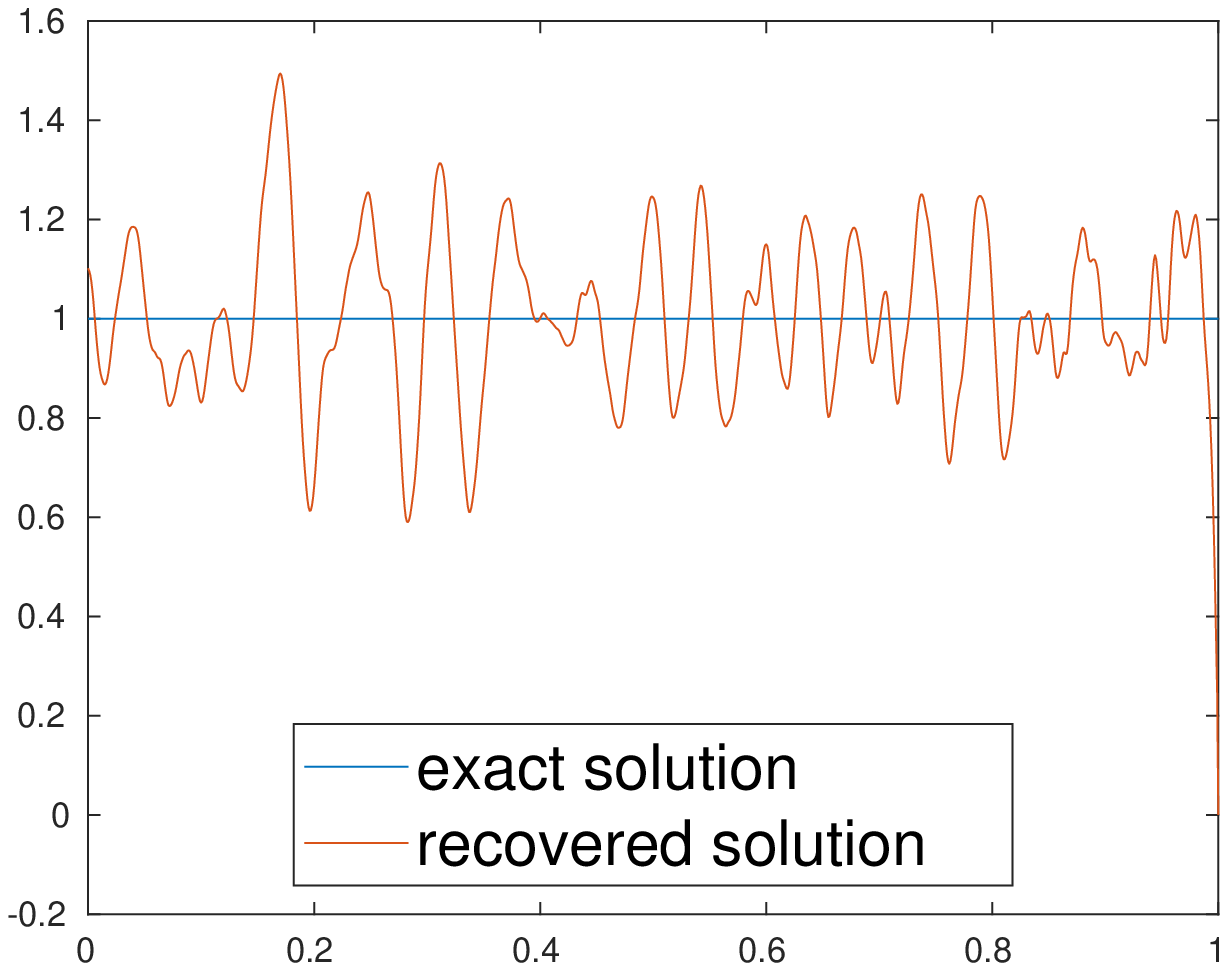} &\includegraphics[width=0.5\linewidth]{././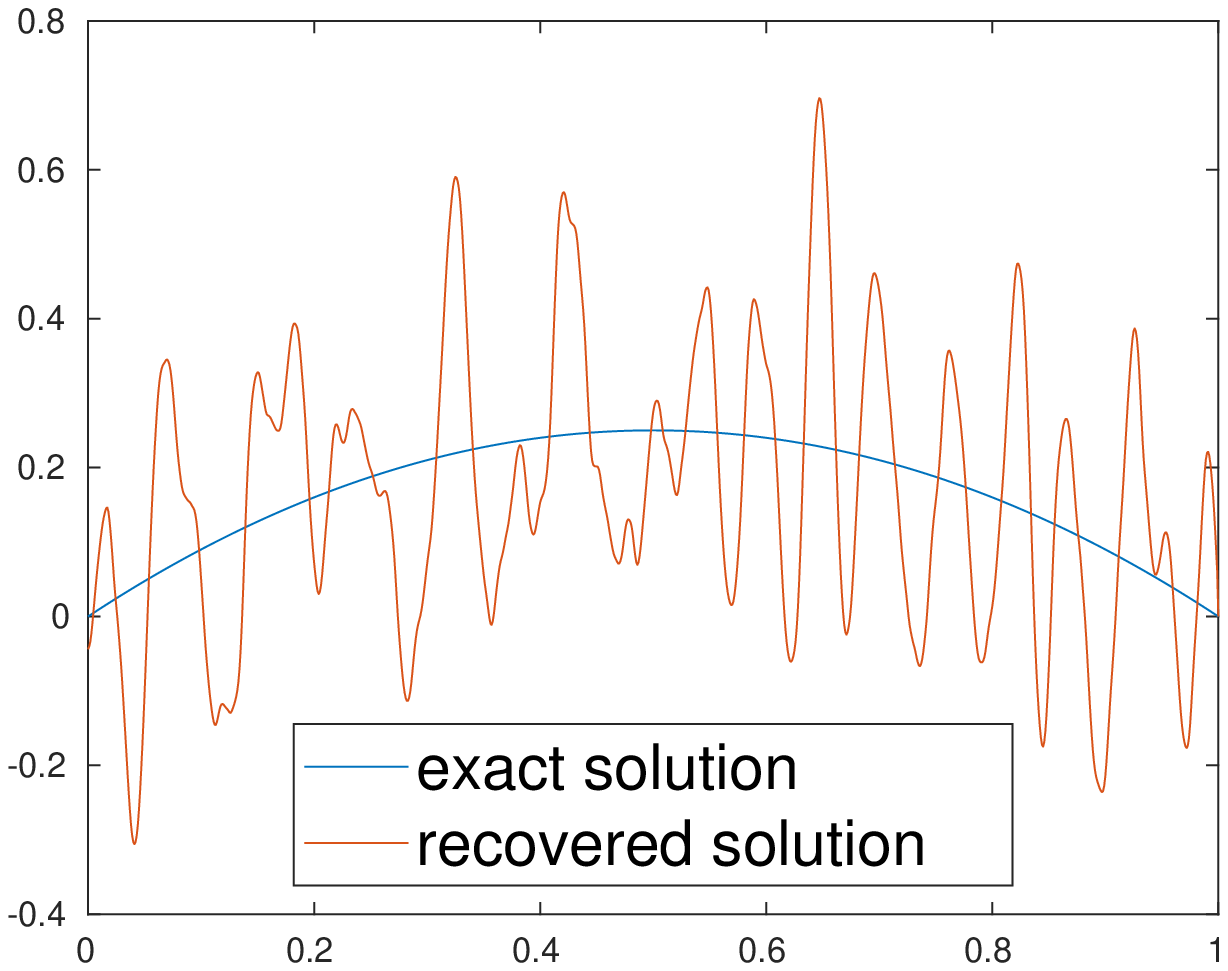}\\
    \multicolumn{2}{c}{$\alpha \approx 10^{-8}$} \\
    \includegraphics[width=0.5\linewidth]{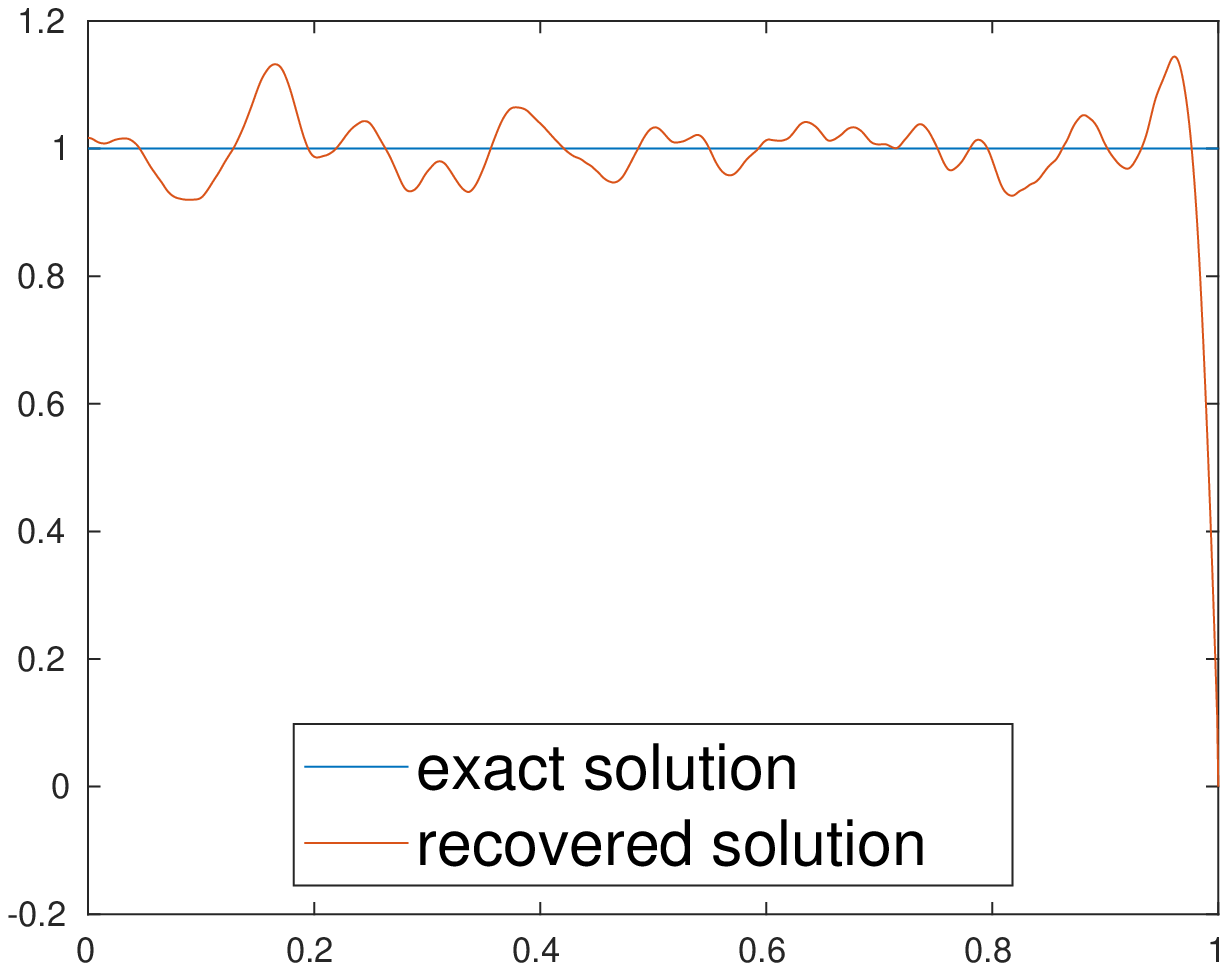} &\includegraphics[width=0.5\linewidth]{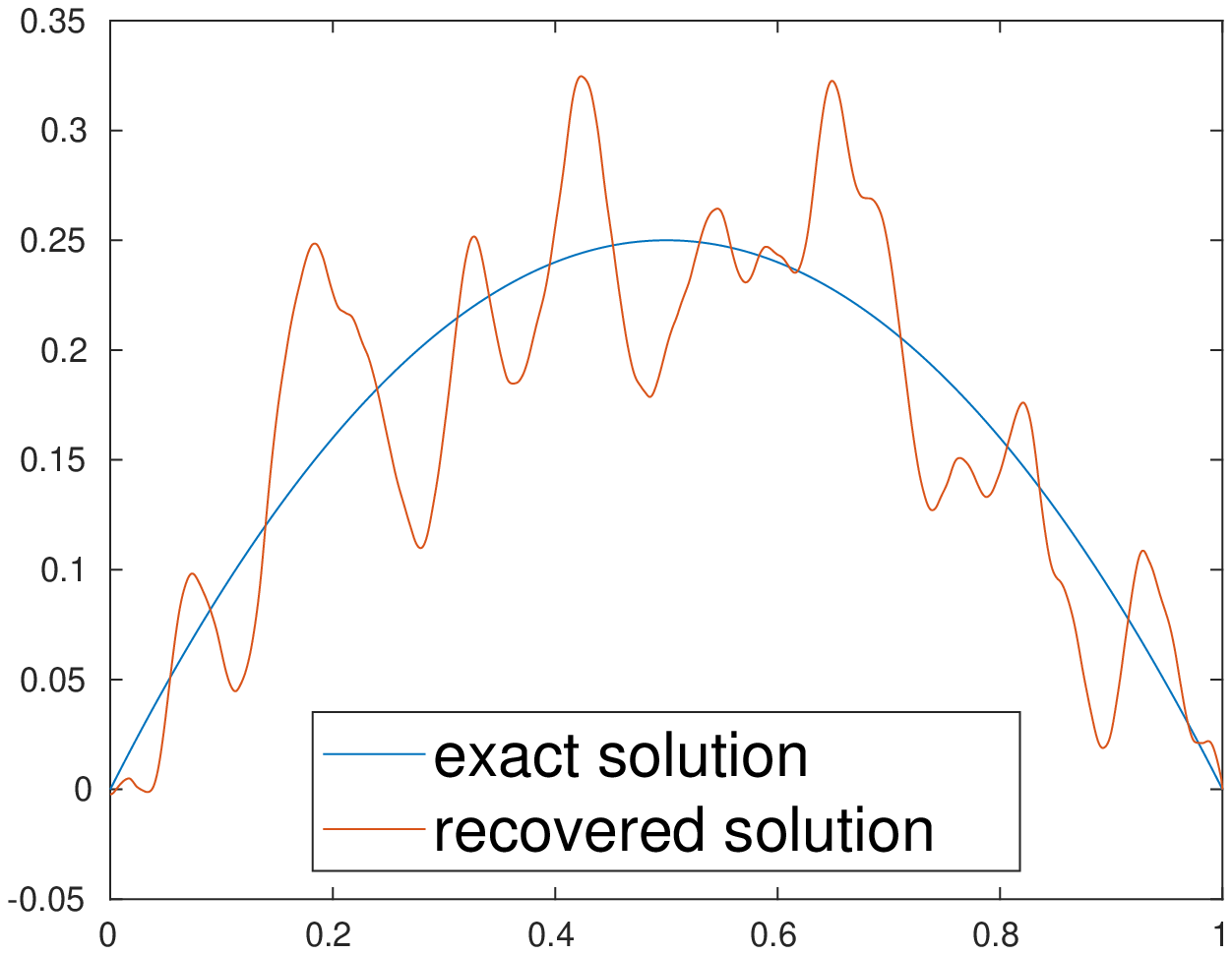}\\
    \multicolumn{2}{c}{$\alpha \approx 2.4\times 10^{-7}$} \\
    \includegraphics[width=0.5\linewidth]{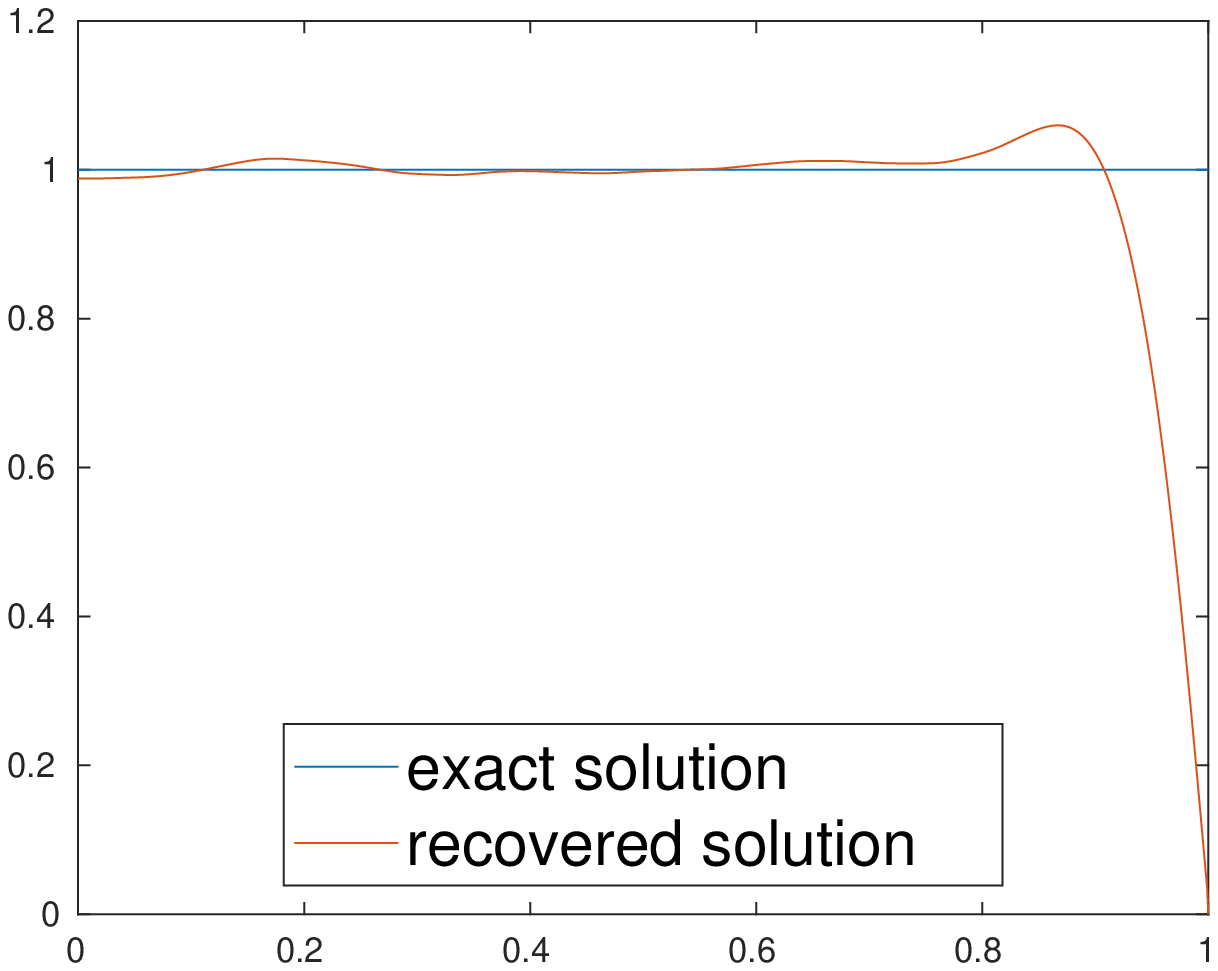} &\includegraphics[width=0.5\linewidth]{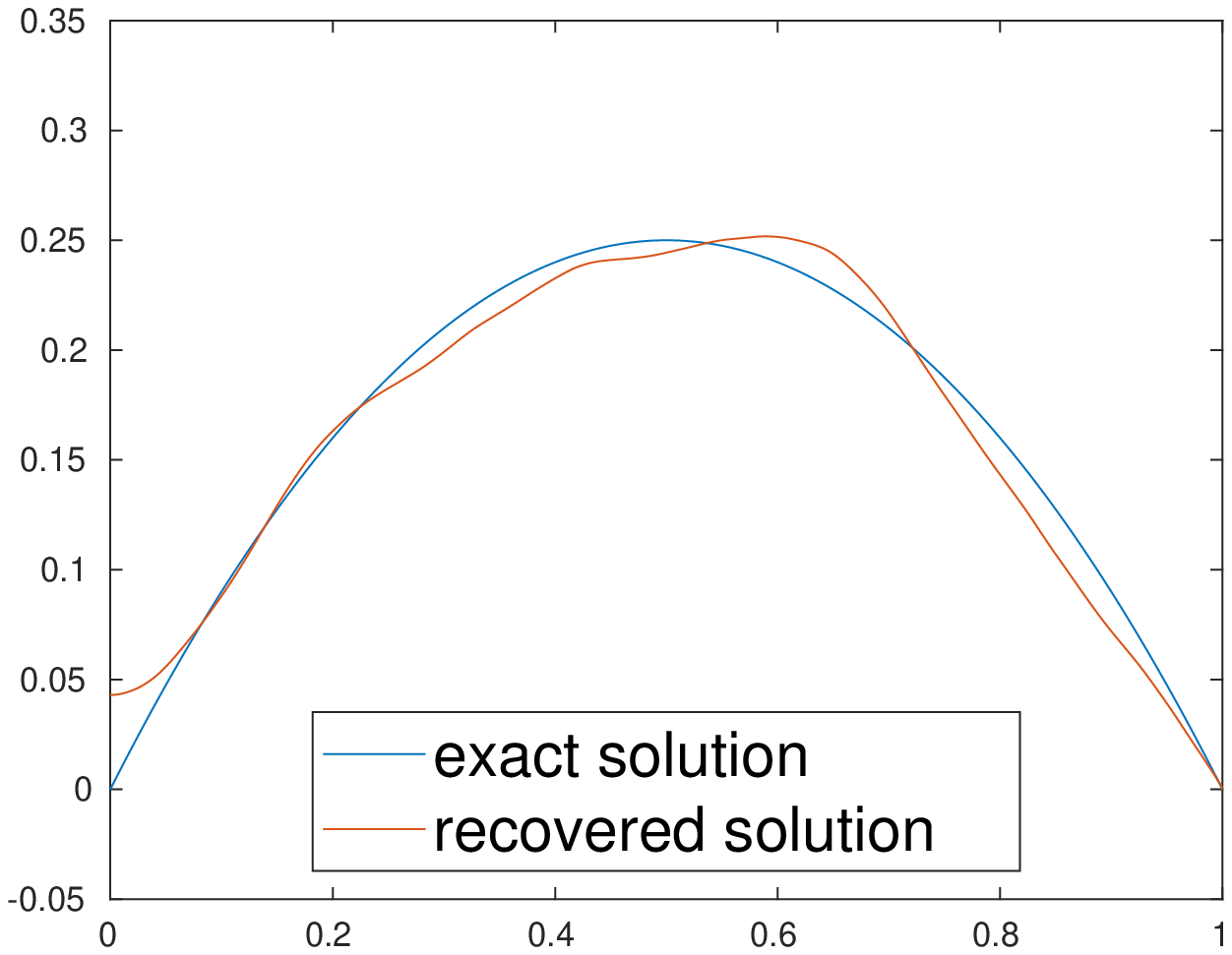}\\
    \multicolumn{2}{c}{$\alpha \approx 2.1\times 10^{-5}$} \\
    \includegraphics[width=0.5\linewidth]{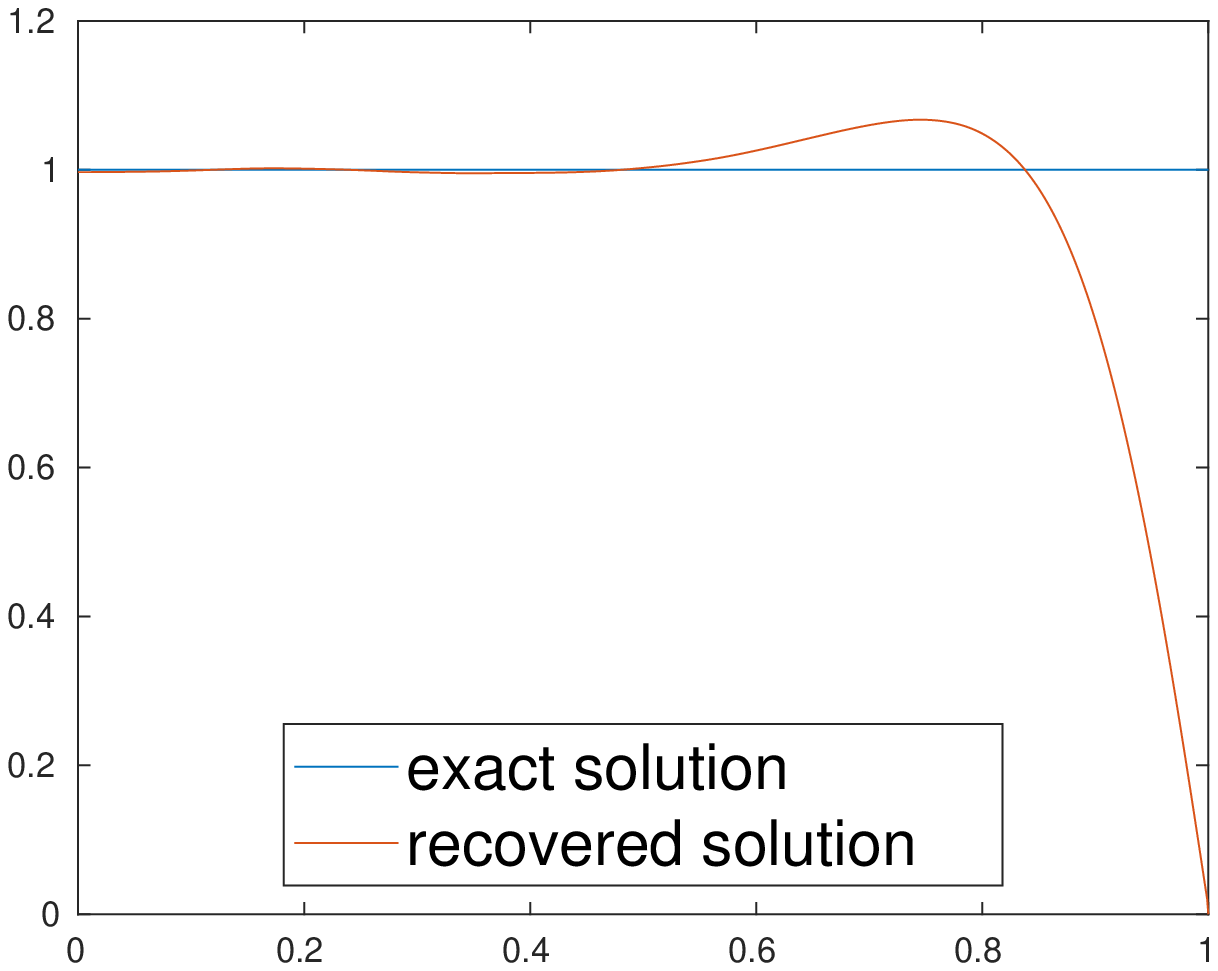} &\includegraphics[width=0.5\linewidth]{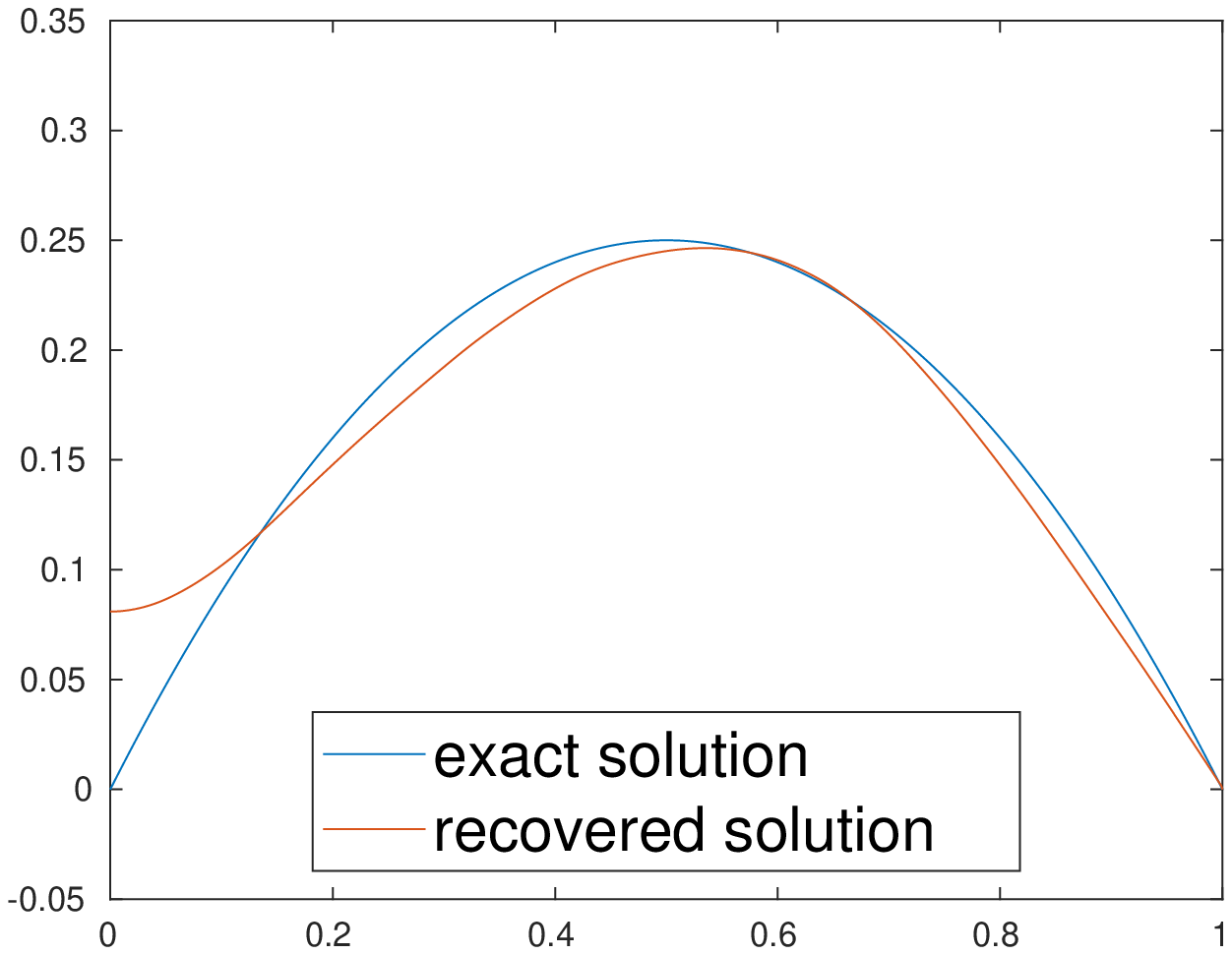}\\
    \multicolumn{2}{c}{$\alpha \approx 1.6\times 10^{-4}$} \\
  \end{tabular}\caption{Exponential growth model with $\xdag(t)\equiv 1; \; (0 < t \leq 1)$ (left) and $\hat{x}^{\dag}(t)=-(t-\frac{1}{2})^2+\frac{1}{4}; \; (0 < t \leq 1)$, $\delta=0.0179$. Regularized and exact solutions for various regularization parameters.}\label{fig:regsol_comp}
\end{figure}

\bibliographystyle{plain}
\bibliography{hhmp}

\end{document}